\documentclass[12pt]{amsart}

\usepackage{amssymb}
\usepackage[top=2cm,bottom=5cm,left=3cm,right=1cm]{geometry}
\usepackage[hidelinks]{hyperref}

\theoremstyle{plain}
\newtheorem{theorem}{Theorem}[section]
\newtheorem{lemma}[theorem]{Lemma}
\newtheorem{proposition}[theorem]{Proposition}
\newtheorem{corollary}[theorem]{Corollary}

\theoremstyle{definition}

\newtheorem{example}[theorem]{Example}
\newtheorem{question}[theorem]{Question}

\theoremstyle{remark}
\newtheorem{remark}[theorem]{Remark}

\title{Finite Orders of Products of Two Class Transpositions Divide 840}
\author{Alex Iskra}
\address{Sobolev Institute of Mathematics, 4 Acad. Koptyug avenue, 630090, Novosibirsk, Russia.}
\address{Novosibirsk State  University, 2 Pirogova Street, 630090, Novosibirsk, Russia.} 
\email{a.iskra@g.nsu.ru}
\date{\today}

\begin{document}
	
	\begin{abstract}
		We prove that if the product of two class transpositions has finite
		order, then its order divides 840. This gives a complete affirmative
		answer to Question 18.48 in the Kourovka Notebook. We also obtain a criterion for the product of two class transpositions to have infinite order. In addition, we
		construct a pair of class transpositions whose product has order 840,
		showing that the bound is sharp.
	\end{abstract}
	
	\maketitle	
	
	\medskip
	\noindent\textbf{Keywords:}
	class transpositions, products of involutions, finite order, graph of class transpositions, linear Diophantine systems, combinatorics on words.
	
	\section{Introduction}
	
	In 2010, S.~Kohl introduced the notion of a class transposition of the integers in \cite{K}. A class transposition is an involution that interchanges the elements of two disjoint residue classes \(r_1(m_1)\) and \(r_2(m_2)\). A class transposition is denoted by \(\tau_{r_1(m_1),r_2(m_2)}\). The group generated by all class transpositions is denoted by \(\operatorname{CT}(\mathbb{Z})\).
	
	The cycle structure of products of two class transpositions
	was already studied by Kohl in a 2009 preprint, revised
	in 2011 \cite{KohlCycles}. This work identified
	\(18\) essentially different types of mutual intersections
	of the four residue classes involved and presented computations
	using the \textsf{RCWA} package for \textsf{GAP}.
	
	In 2012, at the Nikolaus Conference in Aachen, Kohl
	posed the question of the possible orders of products of two
	class transpositions, and in 2013 he posted it on
	MathOverflow \cite{KohlMO}. During the investigation, M.~Cuntz found
	examples of orders \(8\), \(24\), and \(42\), after which Kohl also found orders
	\(40\), \(84\), \(120\), \(168\), and \(420\).
	In 2014, the eighteenth edition of the Kourovka Notebook included
	Question~18.48 \cite{Kour}: \emph{is it true that there are only finitely many positive integers that occur as orders of products of two class transpositions?}
	
	For horizontal class transpositions, this question was answered in \cite{B}.
	A transposition is called \textit{horizontal} if the residue classes
	it interchanges have the same modulus. It was proved
	that the order of the product of two horizontal class
	transpositions belongs to the set \(\{1,2,3,4,6,12\}\), and that each of these values occurs. The same paper
	introduced the graph of a pair of class transpositions and established
	a connection between its connected components and the cycles of their product.
	
	The investigation was continued in a paper by the same authors
	on slanted class transpositions \cite{BI-slanted},
	published in 2026.
	In particular, it was shown that, for two transpositions
	with a common interchanged residue class, the order
	of the product is \(1\), \(3\), or \(\infty\), and a criterion
	for this order to be infinite was obtained. In addition,
	pairs with coinciding residues or moduli were studied,
	and the possible orders of their products were determined
	under additional conditions. This provided answers to Kohl's
	question for several special families of pairs.
	
	In this paper, Question~18.48 is answered for an arbitrary
	pair of class transpositions. We prove that
	\[
	\operatorname{ord}(ab)<\infty
	\quad\Longrightarrow\quad
	\operatorname{ord}(ab)\mid840.
	\]
	Consequently, the set of finite orders of such products
	is finite. Moreover, the bound is sharp: for the transpositions
	\(\tau_{0(40),\,18(28)}\) and
	\(\tau_{4(42),\,5(15)}\), the order of the product is
	\(840\). This equality was verified by Kohl using
	the \textsf{RCWA} package \cite{RCWA}.
	
	The proof develops the graph approach of \cite{B}.
	Segments of components of the graph \(\Gamma(a,b)\) are encoded by words
	over an eight-letter alphabet, and the existence of such segments
	is described by integer solutions of chains of linear
	Diophantine equations. Results on block repetition
	allow components of unbounded size to be constructed
	under suitable conditions. An analysis of short words leads
	to a criterion for infinite order: the product of class transpositions \(a\) and \(b\) has
	infinite order if and only if the pair
	\((a,b)\) exactly interprets some word of length~\(7\).
	It follows that, when the order is finite, all cycles
	of the permutation \(ab\) have length at most~\(8\), and hence
	its order divides
	\(\operatorname{lcm}(1,2,3,4,5,6,7,8)=840\).
	
	The paper is organized as follows. Section~\ref{sec:graph}
	introduces the graph of two class transpositions and describes the connection
	between its components and the cycles of the product. Section~\ref{sec:words}
	introduces the eight-letter language, the Diophantine interpretation of words,
	and the transformations defining their orbits. Section~\ref{sec:pumping}
	proves lemmas on the repetition of blocks of Diophantine equations
	and their corollaries concerning the interpretability of powers of words.
	Section~\ref{sec:infinite-order} establishes sufficient conditions
	for the product to have infinite order, involving the exact interpretability
	of squares, almost squares, and a special five-letter word.
	Section~\ref{sec:criterion} proves that four six-letter words
	are non-interpretable and establishes a criterion for infinite order
	by analyzing factors of words of length~\(7\). Finally,
	Section~\ref{sec:main-result} proves the main theorem
	and gives an example of a product of order~\(840\), showing
	that the bound is sharp.
		
	\section{The graph of two class transpositions}
	\label{sec:graph}
	
	Let \(r,m\in\mathbb Z\) and $0\leq r<m$. The class of integers congruent to \(r\) modulo \(m\) is denoted
	by \(r(m)\):
	\[
	r(m)=\{r+mk\mid k\in\mathbb Z\}.
	\]
	
	If \(r_1(m_1)\cap r_2(m_2)=\varnothing\), then the \textit{class transposition}
	\(\tau_{r_1(m_1),r_2(m_2)}\) is defined to be the involution interchanging
	the integers \(r_1+km_1\) and \(r_2+km_2\) for each \(k\in\mathbb Z\)
	and fixing all other integers. Thus,
	\[
	\tau_{r_1(m_1),r_2(m_2)}
	=
	\prod_{k\in\mathbb Z}
	(r_1+m_1k,r_2+m_2k) \in \operatorname{Sym}(\mathbb{Z}).
	\]
	Note that $r_1(m_1)\cap r_2(m_2)=\varnothing$ if and only if $r_1\not\equiv r_2\pmod{\gcd(m_1,m_2)}$.
	
	Fix two class transpositions
	\[
	a=\tau_{r_1(m_1),r_2(m_2)}
	=
	\prod_{k\in\mathbb Z}
	(r_1+m_1k,r_2+m_2k),
	\]
	\[
	b=\tau_{r_3(m_3),r_4(m_4)}
	=
	\prod_{l\in\mathbb Z}
	(r_3+m_3l,r_4+m_4l).
	\]
	Consider the disjoint sets
	\[
	V_1=\{a_k,b_k\mid k\in\mathbb Z\},
	\quad
	V_2=\{c_l,d_l\mid l\in\mathbb Z\}.
	\]
	
	Define the \textit{graph of two class transpositions} \(\Gamma(a,b)=(V,E)\), where the vertex set is given by \(V=V_1\sqcup V_2\). Define a map \(\mu\colon V\longrightarrow \mathbb{Z}\) by
	\[
	\mu(a_k)=r_1+m_1k,
	\quad
	\mu(b_k)=r_2+m_2k,
	\quad
	\mu(c_l)=r_3+m_3l,
	\quad
	\mu(d_l)=r_4+m_4l.
	\]
	
	The set \(E\) consists of all unordered pairs
	\(\{v,w\}\subseteq V\) satisfying one of the following
	conditions:
	\begin{enumerate}
		\item \(\mu(v)=\mu(w), \quad v\in V_1,\quad w\in V_2\);
		\item \(a(\mu(v))=\mu(w), \quad v,w\in V_1\);
		\item \(b(\mu(v))=\mu(w), \quad v,w\in V_2\).
	\end{enumerate}
	Edges satisfying the first condition are called \textit{edges of the first
	type}, and edges satisfying the second or third condition are called
	\textit{edges of the second type}.
	
	Paths are allowed to have repeated vertices and edges.
	A path is called \textit{simple} if its vertices are
	pairwise distinct.
	
	\begin{lemma}[\cite{B}]
		\label{lem:vertex-degrees}
		For each vertex \(v\) of the graph \(\Gamma(a,b)\),
		exactly one of the following conditions holds:
		\begin{enumerate}
			\item the vertex \(v\) has valency \(1\) and is incident to
			an edge of the second type;
			\item the vertex \(v\) has valency \(2\) and is incident to
			one edge of the first type and one edge of the second type.
		\end{enumerate}
	\end{lemma}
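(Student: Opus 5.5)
The plan is to count, for a fixed vertex \(v\), its edges of each type directly from the definitions, using only that \(\mu\) restricted to each of the four families \(\{a_k\},\{b_k\},\{c_l\},\{d_l\}\) is injective (as \(m_i\neq 0\)), together with the disjointness \(r_1(m_1)\cap r_2(m_2)=\varnothing\) and \(r_3(m_3)\cap r_4(m_4)=\varnothing\). By symmetry between \(a\) and \(b\) (swap the roles of \(V_1\) and \(V_2\)), it suffices to treat \(v\in V_1\); say \(v=a_k\), the case \(v=b_k\) being identical.

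\emph{Edges of the second type.} For \(v=a_k\) we have \(\mu(v)=r_1+m_1k\in r_1(m_1)\), so \(a(\mu(v))=r_2+m_2k\). A vertex \(w\in V_1\) with \(\mu(w)=r_2+m_2k\) must be \(b_k\): it cannot be some \(a_j\), since \(r_1(m_1)\) and \(r_2(m_2)\) are disjoint, and among the \(b_j\) injectivity forces \(j=k\). Hence \(v\) is incident to exactly one edge of the second type, namely \(\{a_k,b_k\}\); since \(a_k\neq b_k\), this is a genuine edge, not a loop. The same holds for every vertex of \(V_2\) with \(b\) in place of \(a\).

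\emph{Edges of the first type.} These join \(v\) to those \(w\in V_2\) with \(\mu(w)=\mu(v)=:n\). Because \(r_3(m_3)\) and \(r_4(m_4)\) are disjoint, \(n\) lies in at most one of them, and within that class injectivity gives a unique index. So there are either no such \(w\) (when \(n\notin r_3(m_3)\cup r_4(m_4)\)) or exactly one.

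Combining the two counts, \(v\) has valency \(1\) with its single edge of the second type, or valency \(2\) with one edge of each type; these alternatives are mutually exclusive by valency. The only step needing care is ruling out a second edge of the second type, which is exactly where disjointness of the classes interchanged by \(a\) (respectively \(b\)) is used.
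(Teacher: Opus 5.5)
Your argument is correct and complete. This paper does not prove the lemma; it quotes it from the earlier work on horizontal class transpositions, so there is no in-paper route to compare against. Your direct count from the definitions is exactly the verification that citation stands in for. There is exactly one second-type edge, by disjointness of the two classes interchanged by $a$ (resp.\ $b$) together with injectivity of $\mu$ on each of the four families. There is at most one first-type edge, by disjointness of the two classes interchanged by the other transposition.

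The one step you leave implicit is the following. Since edges are unordered pairs, $\{v,w\}$ satisfies condition (2) if either $a(\mu(v))=\mu(w)$ or $a(\mu(w))=\mu(v)$. These two conditions coincide because $a$ is an involution, and likewise for $b$. So checking only the first orientation, as you do, loses nothing.
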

	
	Consequently, each connected component of the graph \(\Gamma(a,b)\)
	is a finite cycle, a finite path, a two-way infinite
	path, or a one-way infinite path.
	
	The \textit{length} of a connected component \(C\) is defined to be the number of its edges
	and is denoted by \(|C|\).
	
	\begin{theorem}[\cite{B}]
		\label{propmain}
		Let \(\{C_i\}_{i\in I}\) be the set of connected components
		of the graph \(\Gamma(a,b)\). For each \(i\in I\), set
		\[
		\nu_i=
		\begin{cases}
			\dfrac{|C_i|}{4},
			& \text{if \(C_i\) is a finite cycle},\\[2mm]
			\dfrac{|C_i|+3}{2},
			& \text{if \(C_i\) is a finite path},\\[2mm]
			\infty,
			& \text{if \(C_i\) is infinite}.
		\end{cases}
		\]
		Then $\operatorname{ord}(ab) = \operatorname{lcm}\{\nu_i\mid i\in I\}$.
	\end{theorem}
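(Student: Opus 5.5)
The plan is to identify the connected components of \(\Gamma(a,b)\) with the orbits of the dihedral group \(\langle a,b\rangle\) on \(\operatorname{supp}(a)\cup\operatorname{supp}(b)\). We then read off the cycle type of \(ab\) on each orbit from the shape of the component. Integers outside both supports are fixed by \(ab\) and contribute \(1\) to the lcm, so they may be ignored.

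\emph{Step 1: the integer walk of a component.} Every integer \(x\in\operatorname{supp}(a)\) equals \(\mu(v)\) for exactly one \(v\in V_1\), because \(r_1(m_1)\) and \(r_2(m_2)\) are disjoint. Likewise every \(x\in\operatorname{supp}(b)\) equals \(\mu(w)\) for exactly one \(w\in V_2\). So an integer in both supports is represented by two vertices joined by a first-type edge, and an integer in only one support is represented by a single vertex. Second-type edges join \(\mu^{-1}(x)\) to \(\mu^{-1}(a(x))\) in \(V_1\), or \(\mu^{-1}(x)\) to \(\mu^{-1}(b(x))\) in \(V_2\). By Lemma~\ref{lem:vertex-degrees}, walking along a component and applying \(\mu\) (collapsing each first-type edge) gives a sequence of integers \(\dots,x_{i},x_{i+1},\dots\) in which consecutive terms are related alternately by \(a\) and by \(b\). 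A valency-\(1\) vertex is exactly an integer moved by one of \(a,b\) and fixed by the other. Hence the set \(\mu(C)\) is precisely one \(\langle a,b\rangle\)-orbit contained in the union of supports.

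\emph{Step 2: counting.} Let \(n=|\mu(C)|\).
\begin{itemize}
\item \(C\) a finite cycle. Every integer lies in both supports, giving \(2n\) vertices. There are \(n\) first-type edges and \(n/2\) edges each from \(a\) and from \(b\), so \(|C|=2n\). On the \(n\)-gon where \(a,b\) act as alternating reflections without fixed points, \(ab\) is a rotation by two steps. It therefore splits into two cycles of length \(n/2=|C|/4\).
\item \(C\) a finite path. The two end integers lie in one support only and the other \(n-2\) lie in both. This gives \(2n-2\) vertices and \(2n-3\) edges, so \(n=(|C|+3)/2\). On a path of \(n\) points where \(a,b\) are alternating "reflections" each fixing one endpoint, \(ab\) is a single \(n\)-cycle. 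This is the standard zig-zag check: \(ab\) moves \(x_i\) two steps along the path and turns around at the ends.
\item \(C\) infinite. The orbit \(\mu(C)\) is infinite. Since \(\langle a,b\rangle\) is generated by \(ab\) and \(a\), and \(a\) maps each \(\langle ab\rangle\)-orbit into at most one other, \(ab\) has an infinite orbit there. So \(ab\) has infinite order, matching \(\nu_i=\infty\).
\end{itemize}

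\emph{Step 3: conclusion.} The permutation \(ab\) is the disjoint union of its restrictions to the orbits \(\mu(C_i)\). Its order is therefore the lcm of the orders of these restrictions, namely \(\nu_i\). This lcm is infinite if some component is infinite, or if the \(\nu_i\) are unbounded, in which case both sides are \(\infty\).

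The main obstacle is the bookkeeping in Step 1. One must verify carefully, using Lemma~\ref{lem:vertex-degrees}, that distinct components give disjoint orbits, so that \(\mu\) restricted to a component is injective up to the first-type pairing. One must also check that no integer appears twice along a finite cycle except through the closing identification. The edge counts in Step 2 are then routine.
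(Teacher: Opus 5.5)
Your proposal is correct and takes essentially the same route as the component–cycle correspondence the paper imports from its reference and records in Remark~\ref{rem:component-cycles}: collapsing first-type edges turns each component into an alternating $a$/$b$ walk on a single $\langle a,b\rangle$-orbit, so a finite path gives one cycle of $ab$ of length $(|C|+3)/2$ and a finite cycle gives two cycles of length $|C|/4$. The only deviation is in the infinite case, where you show just that the infinite orbit $\langle ab\rangle x\cup\langle ab\rangle ax$ contains an infinite $\langle ab\rangle$-orbit rather than the exact one-versus-two count of the remark; this is all the order formula needs (a small slip: when a path orbit has an even number of points, one involution fixes both endpoints rather than each fixing one, but your zig-zag conclusion is unaffected).
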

	
	Here the least common multiple is taken to be \(\infty\) if
	the family \(\{\nu_i\}_{i\in I}\) contains \(\infty\) or has no
	finite common multiple. The length of each finite cycle is divisible
	by \(4\), and the length of each finite path is odd, so the numbers
	\(\nu_i\) in the first two cases are integers.
	
	\begin{remark}
		\label{rem:component-cycles}
		The order formula in Theorem~\ref{propmain} follows
		from the following correspondence established in \cite{B}.
		
		Each component \(C_i\) corresponds to one cycle of \(ab\)
		of length \(\nu_i\) if it is a finite path or a one-way
		infinite path, and to two disjoint cycles of \(ab\),
		each of length \(\nu_i\), if it is a finite cycle or
		a two-way infinite path.
		Cycles corresponding to distinct connected components
		are disjoint.
	\end{remark}
	
	Thus, the order of the product \(ab\) is determined by the types and
	lengths of the connected components of the graph \(\Gamma(a,b)\).
	
	\section{The eight-letter language and Diophantine interpretation}
	\label{sec:words}
	
	\subsection{Structurally admissible words and exact interpretability}
	
	Set 
	$$
	\Sigma = \{\ell,r,p,q,\bar\ell,\bar r,\bar p,\bar q\}.
	$$
	Associate with each letter \(\alpha \in \Sigma\) a four-letter word \(\pi(\alpha)\):
	\[
	\begin{array}{c|cccccccc}
		\alpha
		&\ell&r&p&q&\bar\ell&\bar r&\bar p&\bar q\\ \hline
		\pi(\alpha)
		&BACD&ABDC&BADC&ABCD&DCAB&CDBA&CDAB&DCBA.
	\end{array}
	\]
	
	A word \(w=\alpha_1\alpha_2\cdots\alpha_n\) is called \textit{structurally admissible} if \(n\geq1\) and the word formed by the last two letters of \(\pi(\alpha_i)\) coincides with the word formed by the first two letters of \(\pi(\alpha_{i+1})\) for each \(1\leq i<n\). For example, the word \(\ell\bar r\) is structurally admissible.
	
	\begin{remark}
		In every structurally admissible word, barred and unbarred
		letters alternate.
	\end{remark}
	
	For a letter \(\alpha\in\Sigma\), let \(\chi(\alpha)\) denote the pair
	formed by the second and third symbols of the word \(\pi(\alpha)\).
	Thus,
	\[
	\begin{array}{c|cccccccc}
		\alpha&\ell&r&p&q&\bar\ell&\bar r&\bar p&\bar q\\ \hline
		\chi(\alpha)&AC&BD&AD&BC&CA&DB&DA&CB.
	\end{array}
	\]
	
	In what follows, the order of the classes in each of the two class transpositions \(a\) and \(b\) is fixed. Choose the parameters \(r_X\) and \(m_X\), \(X\in\{A,B,C,D\}\), according to the table
	\[
	\begin{array}{c|cccc}
		X&A&B&C&D\\ \hline
		r_X&r_1&r_2&r_3&r_4\\
		m_X&m_1&m_2&m_3&m_4
	\end{array}
	\]
	
	Let \(\chi(\alpha) = XY\). Associate with the letter \(\alpha\) the linear Diophantine equation
	\[
	r_X+m_Xu=r_Y+m_Yv,
	\]
	which we denote by \(E_{\alpha}(u,v)\), or, equivalently,
	\[
	m_Xu-m_Yv=r_Y-r_X.
	\]
	Let \(w=\alpha_1\alpha_2\cdots \alpha_n\) be a structurally admissible word. Its \textit{Diophantine system} \(\mathcal S(w)\)
	is defined to be the system
	\[
	E_{\alpha_1}(u_1, u_2), E_{\alpha_2}(u_2, u_3), \ldots, E_{\alpha_n}(u_n, u_{n+1}).
	\]
	A word \(w\) is called \textit{interpretable} by a pair of class transpositions \((a, b)\) if the system \(\mathcal S(w)\) has an integer solution.
	
	A word \(w\) is called \textit{exactly interpretable} by a pair \((a,b)\) if the system \(\mathcal S(w)\) has an integer solution \((u_1,\ldots,u_{n+1})\) such that \(u_i\ne u_j\) for all \(1\leq i<j\leq n+1\) with \(i\equiv j\pmod 2\).
	
	Label the vertices \(a_k, b_k, c_k, d_k\) of the graph \(\Gamma(a, b)\) by the symbols \(A, B, C, D\), respectively.
	
	\begin{proposition}
		\label{prop:word-path-correspondence}
		Let \(w=\alpha_1\cdots\alpha_n\) be a structurally admissible
		word. Each integer solution of the system \(\mathcal S(w)\)
		determines an oriented path
		\[
		v_1,v_2,\ldots,v_{2n+2}
		\]
		in the graph \(\Gamma(a,b)\), whose first and last edges
		are of the second type, and such that, for each \(1\leq i\leq n\),
		the sequence of labels of the vertices
		\[
		v_{2i-1},v_{2i},v_{2i+1},v_{2i+2}
		\]
		coincides with the word \(\pi(\alpha_i)\).
		
		Conversely, each such oriented path uniquely
		determines a structurally admissible word and an integer solution
		of its Diophantine system. The path is simple
		if and only if the corresponding interpretation is exact.
	\end{proposition}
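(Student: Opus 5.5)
The plan is to make the correspondence explicit by fixing how the indices \(u_j\) sit on the vertices, and then to check the three claims (existence of the path, the converse, and simple \(\Leftrightarrow\) exact) directly from the definitions of \(\Gamma(a,b)\), \(\pi\) and \(\chi\).

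\emph{Step 1: the edges of \(\Gamma(a,b)\).} Since \(a\) maps \(r_1+m_1k\) to \(r_2+m_2k\), and these lie in the disjoint classes \(r_1(m_1)\) and \(r_2(m_2)\), the second-type edges inside \(V_1\) are exactly \(\{a_k,b_k\}\), \(k\in\mathbb Z\). Similarly, the second-type edges inside \(V_2\) are exactly \(\{c_l,d_l\}\). Thus a second-type edge joins two vertices with the \emph{same index} and labels \(\{A,B\}\) or \(\{C,D\}\). A first-type edge joins \(X_u\) and \(Y_v\), with \(X\in\{A,B\}\) and \(Y\in\{C,D\}\) or vice versa, if and only if \(r_X+m_Xu=r_Y+m_Yv\).

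\emph{Step 2: construction of the path.} Let \(\pi(\alpha_i)=X_iY_iZ_iW_i\), so \(\chi(\alpha_i)=Y_iZ_i\). Given a solution \((u_1,\dots,u_{n+1})\) of \(\mathcal S(w)\), I set
\[
v_{2i-1}=(X_i)_{u_i},\quad v_{2i}=(Y_i)_{u_i}\quad(1\le i\le n),\qquad v_{2n+1}=(Z_n)_{u_{n+1}},\quad v_{2n+2}=(W_n)_{u_{n+1}},
\]
where \(X_u\) denotes the vertex with label \(X\) and index \(u\). In each \(\pi(\alpha)\), the first two letters lie in one of \(\{A,B\}\), \(\{C,D\}\) and the last two lie in the other; this is visible in the table. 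Hence, by Step 1:
\begin{itemize}
\item \(v_{2i-1}v_{2i}\) is a second-type edge;
\item \(v_{2i}v_{2i+1}\) is a first-type edge, precisely because \(E_{\alpha_i}(u_i,u_{i+1})\) holds;
\item \(v_{2i+1}v_{2i+2}\) is a second-type edge.
\end{itemize}
Structural admissibility gives \(X_{i+1}Y_{i+1}=Z_iW_i\). Therefore the window for \(\alpha_{i+1}\) starts at the same two vertices \((Z_i)_{u_{i+1}},(W_i)_{u_{i+1}}\), so the windows overlap consistently and each window \(v_{2i-1},\dots,v_{2i+2}\) is labelled by \(\pi(\alpha_i)\).

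\emph{Step 3: the converse.} Take an oriented path \(v_1,\dots,v_{2n+2}\) with first and last edges of the second type, understood, as in the correspondence of \cite{B}, to traverse a component without immediate backtracking. By Lemma~\ref{lem:vertex-degrees} each vertex has at most one edge of each type, so the edge types alternate. Each window \(v_{2i-1}v_{2i}v_{2i+1}v_{2i+2}\) then has the label pattern (second-type pair)(cross to the other group)(second-type pair). There are exactly \(4\cdot2=8\) such label words, and one checks that they are precisely the eight words \(\pi(\alpha)\), \(\alpha\in\Sigma\). This determines \(\alpha_i\) uniquely, and consecutive windows sharing two vertices gives structural admissibility. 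Reading off the common index \(u_j\) of the second-type pair \(v_{2j-1},v_{2j}\) and using Step 1 for the first-type edges yields a solution of \(\mathcal S(w)\). This is inverse to Step 2.

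\emph{Step 4: simple \(\Leftrightarrow\) exact.} This is the step needing the most care, though it is short. The vertices come in pairs \(P_j=\{v_{2j-1},v_{2j}\}\) with common index \(u_j\) and label set \(\{A,B\}\) or \(\{C,D\}\). By the barred/unbarred alternation, this label set depends only on the parity of \(j\).
\begin{itemize}
\item Within a pair the labels differ, so the two vertices of a pair are distinct.
\item Pairs of opposite parity have disjoint label sets, so they share no vertex.
\item For \(j\equiv j'\pmod 2\), the pairs \(P_j\) and \(P_{j'}\) share a vertex iff \(u_j=u_{j'}\); indeed, if the indices agree, then both pairs equal \(\{A_u,B_u\}\) or both equal \(\{C_u,D_u\}\).
\end{itemize}
Hence the path is simple iff \(u_j\neq u_{j'}\) for all \(j<j'\) with \(j\equiv j'\pmod2\), which is exactly the definition of exact interpretability. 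The main obstacle is purely bookkeeping: getting the index assignment right so that the overlapping windows agree.
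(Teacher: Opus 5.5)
Your proof is correct. Steps 1--3 follow the paper's construction. A second-type edge is the pair of vertices with common index \(u_i\) and labels \(X_iY_i\), and \(E_{\alpha_i}(u_i,u_{i+1})\) says exactly that the first-type edge joining consecutive such pairs exists. You also make explicit two points the paper leaves implicit:
\begin{itemize}
\item the eight possible window label patterns are precisely the eight values of \(\pi\);
\item alternation of edge types needs the absence of immediate backtracking, since the paper allows repeated edges in paths in general.
\end{itemize}

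The real difference is in Step~4. The paper works with edges:
\begin{itemize}
\item exactness means second-type edges do not repeat;
\item hence first-type edges do not repeat, because each vertex lies on exactly one second-type edge;
\item a walk without repeated edges in a graph of valency at most \(2\) can fail to be simple only by closing up into a cycle;
\item this is impossible, because the path has \(n+1\) second-type edges and only \(n\) first-type edges.
\end{itemize}
You instead partition the vertices of the path into the pairs \(P_j\). You note that the label set of \(P_j\) is determined by the parity of \(j\), so two vertices coincide exactly when two indices of the same parity coincide.

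Your argument is more self-contained. It is direct vertex-level bookkeeping that needs neither the global structure of the components nor the count ruling out a closed cycle, and it gives both directions of the equivalence at once. The paper's version is shorter, but it leaves the step from ``no repeated edges'' to ``simple'' to the reader's familiarity with walks in graphs of maximum degree \(2\).
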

	
	\begin{proof}
		The structural admissibility of the word \(w\) implies the equalities
		\[
		\pi(\alpha_i)=X_iY_iX_{i+1}Y_{i+1},
		\qquad
		X_iY_i\in\{AB,BA,CD,DC\}.
		\]
		The pair \(X_iY_i\) and the parameter \(u_i\) correspond to
		an oriented edge of the second type with parameter \(u_i\).
		The equation \(E_{\alpha_i}(u_i,u_{i+1})\) means that the terminal vertex
		of this edge is joined by an edge of the first type to the initial vertex of the next
		edge of the second type. This gives the required path.
		
		Conversely, the labels of consecutive edges of the second type and
		of the edge of the first type joining them uniquely determine the letter
		\(\alpha_i\) and the equation \(E_{\alpha_i}(u_i,u_{i+1})\).
		
		Exactness of the interpretation means that edges of the second type do not
		repeat. Then edges of the first type do not repeat either, since
		each vertex is incident to exactly one edge of the second type.
		In a graph of valency at most \(2\), such a path is simple:
		it cannot be a cycle, since it contains \(n+1\) edges of the second
		type and \(n\) edges of the first type. The converse is obvious.
	\end{proof}
	
	\begin{lemma}
		\label{lem:factor}
		If a word \(w\) is exactly interpretable by a pair of class
		transpositions \((a,b)\), then every nonempty factor of \(w\) is exactly
		interpretable by the same pair.
	\end{lemma}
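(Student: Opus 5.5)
This follows directly from the definitions; I would restrict an exact solution to the indices of the factor. Let \(w=\alpha_1\cdots\alpha_n\) be exactly interpretable, with integer solution \((u_1,\ldots,u_{n+1})\) of \(\mathcal S(w)\) satisfying \(u_i\neq u_j\) whenever \(i<j\) and \(i\equiv j\pmod 2\). Take a nonempty factor \(w'=\alpha_s\alpha_{s+1}\cdots\alpha_t\) with \(1\le s\le t\le n\).

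First, \(w'\) is structurally admissible. The admissibility condition only involves consecutive letters, and every pair of consecutive letters of \(w'\) is also a pair of consecutive letters of \(w\).

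Next, I would check that \((u_s,u_{s+1},\ldots,u_{t+1})\) solves \(\mathcal S(w')\). The system \(\mathcal S(w')\) consists of the equations \(E_{\alpha_s}(x_1,x_2),\ldots,E_{\alpha_t}(x_{t-s+1},x_{t-s+2})\). Substituting \(x_k=u_{s+k-1}\), these become exactly the equations \(E_{\alpha_i}(u_i,u_{i+1})\) for \(s\le i\le t\), which are among the equations of \(\mathcal S(w)\). They therefore hold.

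Finally, exactness carries over. Suppose \(x_k=x_{k'}\) with \(k<k'\) and \(k\equiv k'\pmod 2\). Then \(u_{s+k-1}=u_{s+k'-1}\). The two indices \(s+k-1\) and \(s+k'-1\) are distinct and have the same parity, so this contradicts the exactness of the original solution.

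Alternatively, the same argument can be phrased through Proposition~\ref{prop:word-path-correspondence}: a simple path realizing \(w\) contains a subpath realizing \(w'\), and a subpath of a simple path is simple. The only point needing care in that version is that the subpath should still begin and end with edges of the second type. Taking vertices \(v_{2s-1},\ldots,v_{2t+2}\) ensures this, because the edge \(v_{2i-1}v_{2i}\) is always of the second type. The direct index argument above avoids this check, so I would use it. I expect no real obstacle; the only thing to keep straight is the parity bookkeeping when the indices are shifted by \(s-1\).
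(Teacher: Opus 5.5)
Your proof is correct, but your main argument takes a more elementary route than the paper's. The paper argues through the graph, using Proposition~\ref{prop:word-path-correspondence}:
\begin{itemize}
\item an exact interpretation of \(w\) is a simple oriented path in \(\Gamma(a,b)\);
\item the factor corresponds to a segment of that path, and a segment of a simple path is again simple;
\item the converse direction of the same proposition turns this segment back into an exact interpretation of the factor.
\end{itemize}
You instead stay at the level of the Diophantine system. You restrict the solution to \((u_s,\ldots,u_{t+1})\), note that the equations of \(\mathcal S(w')\) are literally a subset of those of \(\mathcal S(w)\), and observe that shifting indices by \(s-1\) preserves parity, so exactness is inherited.

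Your version is self-contained and does not rely on the equivalence between exactness and simplicity of paths. It also checks the structural admissibility of the factor explicitly. The paper's version is a one-line argument once the proposition is available, and it matches the path-based viewpoint used later, for example in Theorem~\ref{thm:length-seven}. In your alternative phrasing, you choose the segment as \(v_{2s-1},\ldots,v_{2t+2}\) so that it begins and ends with edges of the second type. That is exactly the detail the paper leaves implicit when it says that a factor corresponds to a segment of the path.
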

	
	\begin{proof}
		By Proposition~\ref{prop:word-path-correspondence}, an exact
		interpretation of the word \(w\) determines a simple oriented path in
		the graph \(\Gamma(a,b)\). A factor of the word corresponds to a segment of this
		path. This segment is also a simple oriented path, so,
		by the same proposition, it determines an exact interpretation of the factor.
	\end{proof}
	
	\subsection{Orbits of words}
	
	Consider the following permutations of the alphabet \(\Sigma\):
	\[
	\begin{aligned}
		\sigma_1&=
		(\ell\,q)(r\,p)
		(\bar\ell\,\bar q)(\bar r\,\bar p),\\
		\sigma_2&=
		(\ell\,p)(r\,q)
		(\bar\ell\,\bar p)(\bar r\,\bar q),\\
		\sigma_3&=
		(\ell\,\bar\ell)(r\,\bar r)
		(p\,\bar q)(q\,\bar p).
	\end{aligned}
	\]
	The permutation \(\sigma_1\) corresponds to changing the order of the residue classes in the notation for the transposition \(a\), the permutation \(\sigma_2\) to
	changing the order of the residue classes in the notation for \(b\), and the permutation
	\(\sigma_3\) to replacing the pair \((a,b)\) by the pair \((b,a)\).
	
	For example, if the transposition \(a=\tau_{r_1(m_1),r_2(m_2)}\) is written as \(a=\tau_{r_2(m_2),r_1(m_1)}\), leaving the notation for the transposition \(b\) unchanged, then the labels \(A\)
	and \(B\) are interchanged. Consequently, each letter
	\(\alpha\) of the corresponding word is replaced by the letter
	\(\sigma_1(\alpha)\).
	
	Extend each permutation \(\sigma_i\) to a permutation
	of the set of words over the alphabet \(\Sigma\), acting letter by letter:
	\[
	\sigma_i(\alpha_1\cdots\alpha_n)
	=
	\sigma_i(\alpha_1)\cdots\sigma_i(\alpha_n),
	\qquad i\in\{1,2,3\}.
	\]
	
	For \(\alpha\in\Sigma\), let \(\bar\alpha\) denote the letter
	obtained by adding or removing the bar. Set
	\[
	\rho(\alpha_1\cdots\alpha_n)
	=
	\bar\alpha_n\cdots\bar\alpha_1.
	\]
	The transformation \(\rho\) corresponds to traversing the oriented
	path in the opposite direction. All four transformations
	are involutions and preserve the structural admissibility of words.
	
	Let \(\operatorname{Orb}(w)\) denote the orbit of the word \(w\)
	under the action of the group \(\langle\sigma_1,\sigma_2,\sigma_3,\rho\rangle\).
	
	\begin{remark}
		\label{rem:orbit-invariance}
		Let the words \(w\) and \(\widetilde w\) belong to the same orbit.
		If \(w\) is interpretable by a pair of class transpositions \((a,b)\),
		then \(\widetilde w\) is interpretable by some pair
		\((\widetilde a,\widetilde b)\). Moreover, exactness of the interpretation
		and the order of the product of the transpositions are preserved.
		
		Indeed, it suffices to check this for the generating
		transformations. Applying \(\sigma_1\) or \(\sigma_2\) changes
		only the order of the classes in the notation for the transpositions and hence does not change
		the pair \((a,b)\) itself. Applying \(\sigma_3\) replaces it by the pair
		\((b,a)\), while applying \(\rho\) reverses the direction of traversal
		of the corresponding path without changing the interpreting pair.
		All these operations preserve the simplicity of the path and hence
		also the exactness of the interpretation.
		
		Finally, the only operation that changes the product replaces
		\(ab\) by \(ba\). Since \(a^2=1\), we have $ba=a(ab)a$, so \(\operatorname{ord}(ba)=\operatorname{ord}(ab)\). Thus, when considering the interpretability of words,
		it suffices to choose one representative from each orbit.
	\end{remark}
	
	\section{Repetition of blocks of Diophantine equations}
	\label{sec:pumping}
	
	\begin{lemma}
		\label{lem:constant-chain}
		Let \(P,Q\) be positive integers, let \(H\in\mathbb Z\),
		and suppose that $\gcd(P,Q)\mid H$. Then, for each \(N\geq1\), the system
		\[
		\begin{cases}
			Pt_0-Qt_1=H,\\
			Pt_1-Qt_2=H,\\
			\vdots\\
			Pt_{N-1}-Qt_N=H
		\end{cases}
		\]
		has an integer solution.
	\end{lemma}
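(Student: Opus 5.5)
We first reduce to the case \(\gcd(P,Q)=1\). Put \(g=\gcd(P,Q)\) and divide every equation by \(g\); since \(g\mid H\), we obtain the same system with \(P'=P/g\), \(Q'=Q/g\), \(H'=H/g\), where \(\gcd(P',Q')=1\). So assume \(\gcd(P,Q)=1\).

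The solution is then built by downward induction: we choose \(t_N\) first and determine \(t_{N-1},\dots,t_0\) in turn. The equation \(Pt_{i-1}=H+Qt_i\) is solvable in \(t_{i-1}\) exactly when \(Qt_i\equiv -H\pmod P\). Because \(Q\) is invertible modulo \(P\), this condition says that \(t_i\) lies in one fixed residue class \(c\pmod P\), namely \(c\equiv -Q^{-1}H\). The whole difficulty is to guarantee that every \(t_i\) with \(1\le i\le N\) falls in this class, so that the process never stops. We aim for the stronger statement that every \(t_i\) lies in a suitable class modulo \(P^{N-i+1}\).

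For this we use a fixed point. If \(P\neq Q\), the number \(x=H/(P-Q)\) satisfies \(Px-Qx=H\), but it need not be an integer. Instead, consider the substitution \(t_i=s_i+x_0\), where \(x_0\) is an integer with \((P-Q)x_0\equiv H\) modulo a large power of \(P\). The cleaner route is to observe that we only need congruences. Work in \(\mathbb Z_P\)-adic style: the map \(t\mapsto (H+Qt)/P\) is defined on the class \(c\pmod P\). We need a starting \(t_N\) whose iterates stay in the class \(c\) for \(N\) steps. By Hensel-type lifting this is possible:
\begin{itemize}
\item the set of \(t_N\) for which \(t_{N-1}\) exists is the class \(c\pmod P\);
\item the condition that \(t_{N-1}\equiv c\pmod P\) imposes \(H+Qt_N\equiv Pc\pmod{P^2}\), which is one class modulo \(P^2\) inside the previous class, again solvable since \(Q\) is invertible modulo \(P^2\);
\item inductively, the admissible \(t_N\) form a single residue class modulo \(P^{N}\).
\end{itemize}
Hence we pick any \(t_N\) in that class, compute the \(t_i\) backwards, and all of them are integers.

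A more elementary alternative, which may be what we write: choose integers \(u,v\) with \(Pu-Qv=H\). Then set \(t_i=\dots\) using a telescoping formula. Taking \(t_N=P^N k+\text{const}\) and solving explicitly gives
\[
t_{i}= \frac{H(P^{N-i}-Q^{N-i})/(P-Q)+Q^{N-i}t_N}{P^{N-i}}.
\]
We then choose \(t_N\) with \(Q^{N}t_N\equiv -H\,\frac{P^N-Q^N}{P-Q}\pmod{P^N}\), which is solvable because \(\gcd(Q,P)=1\). Divisibility for the intermediate indices follows from the recursion itself, since each \(t_{i-1}=(H+Qt_i)/P\), once shown integral at the top, propagates. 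The main obstacle is precisely this intermediate integrality. The lifting argument above handles it directly, so we rely on the induction on the modulus \(P^j\). The case \(P=Q\) (hence \(P=Q=1\)) is trivial: take \(t_i=t_0-iH\).
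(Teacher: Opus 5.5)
Your proof is correct, and it takes a somewhat different route from the paper's.

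\textbf{The paper's route.} The paper argues forward. From an integer solution $(t_0^*,\dots,t_N^*)$ it forms $t_i^*+p^iq^{N-i}z$, which adds a homogeneous solution. This lets $t_N$ range over $t_N^*+p^N\mathbb Z$. It then fixes $z$ by one congruence modulo $q$, using $\gcd(p^{N+1},q)=1$, so that $t_{N+1}=(pt_N-h)/q$ is an integer.

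\textbf{Your route.} You choose the free end $t_N$ first and determine exactly which values work. Lifting with $Q$ invertible modulo $P^j$ shows that the admissible $t_N$ form one residue class modulo $P^N$. Back-substitution $t_{i-1}=(H+Qt_i)/P$ then stays in $\mathbb Z$.

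\textbf{Comparison.} Both arguments rest on the same fact: the end variable of an integral chain of length $N$ is free modulo $P^N$. The paper needs one congruence modulo the fixed number $q$ per step, and never inverts anything modulo growing powers. Yours gives an exact description of the solution set and, through your closed formula, an explicit solution.

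\textbf{Points to fix when writing it up.}
\begin{itemize}
\item Drop the abandoned fixed-point paragraph.
\item The sentence ``every $t_i$ lies in a suitable class modulo $P^{N-i+1}$'' has the index reversed. The constraint on $t_i$ is modulo $P^{i}$, since $i$ backward steps remain from $t_i$.
\item In the closed-form variant, the intermediate integrality you call the main obstacle is immediate. If $t_0\in\mathbb Z$, then $Qt_1=Pt_0-H\in\mathbb Z$, while the denominator of $t_1$ divides $P^{N-1}$, which is coprime to $Q$. Hence $t_1\in\mathbb Z$, and the same step repeats up to $t_{N-1}$.
\end{itemize}
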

	
	\begin{proof}
		Set
		\[
		d=\gcd(P,Q),\qquad p=P/d,\qquad q=Q/d,\qquad h=H/d.
		\]
		Then \(p,q,h\) are integers, \(\gcd(p,q)=1\), and it suffices to solve the system
		\[
		pt_i-qt_{i+1}=h,\qquad 0\le i<N.
		\]
		
		We proceed by induction on \(N\). For \(N=1\), the existence of a solution
		is obvious.
		
		Let \((t_0^*,\ldots,t_N^*)\) be an integer solution of the system
		of length \(N\). For an arbitrary \(z\in\mathbb Z\), set
		\[
		\widetilde t_i=t_i^*+p^i q^{N-i}z,
		\qquad 0\le i\le N.
		\]
		This substitution preserves all the equalities already satisfied, since,
		for \(0\le i<N\), we have
		\[
		p\widetilde t_i-q\widetilde t_{i+1}
		=
		pt_i^*-qt_{i+1}^*
		+\bigl(p^{i+1}q^{N-i}-p^{i+1}q^{N-i}\bigr)z
		=h.
		\]
		
		Since \(\gcd(p^{N+1},q)=1\), we can choose \(z\in\mathbb Z\)
		such that
		\[
		p^{N+1}z\equiv h-pt_N^*\pmod q.
		\]
		With this choice, the number
		\[
		\widetilde t_{N+1}
		=
		\frac{p\widetilde t_N-h}{q}
		=
		\frac{pt_N^*+p^{N+1}z-h}{q}
		\]
		is an integer and satisfies the equality
		\[
		p\widetilde t_N-q\widetilde t_{N+1}=h.
		\]
		Consequently,
		\((\widetilde t_0,\ldots,\widetilde t_{N+1})\)
		is an integer solution of the system of length $N+1$.
		This completes the induction.
	\end{proof}
	
	\begin{lemma}
		\label{lem:block}
		Let \(\mathcal A(x_1,\ldots,x_m)\) be a system of linear
		Diophantine equations in the variables \(x_1,\ldots,x_m\)
		of the form
		\[
		a_i x_i-b_i x_{i+1}=c_i,
		\qquad 1\le i<m,
		\]
		where \(a_i,b_i\) are positive integers and \(c_i\in\mathbb Z\).
		Let \(A,B\) be positive integers and let \(C\in\mathbb Z\).
		
		If the system
		\[
		\begin{cases}
			\mathcal A(x_1,\ldots,x_m),\\
			Ax_m-By_1=C,\\
			\mathcal A(y_1,\ldots,y_m)
		\end{cases}
		\]
		has an integer solution, then, for each \(N\ge1\), the system
		\[
		\begin{cases}
			\mathcal A\bigl(x_1^{(j)},\ldots,x_m^{(j)}\bigr),
			& 1\le j\le N,\\[1mm]
			A x_m^{(j)}-B x_1^{(j+1)}=C,
			& 1\le j<N
		\end{cases}
		\]
		also has an integer solution.
	\end{lemma}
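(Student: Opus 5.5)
The plan is to reduce the block chain to the constant chain of Lemma~\ref{lem:constant-chain} by parametrizing the solution set of \(\mathcal A\) through its endpoints. A chain \(a_ix_i-b_ix_{i+1}=c_i\) forces a linear relation between \(x_1\) and \(x_m\). We make this relation explicit and then concatenate blocks.

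\emph{Step 1: the endpoint relation of \(\mathcal A\).} Since the system of Lemma~\ref{lem:block} is solvable, \(\mathcal A\) is solvable. Let \(S\subseteq\mathbb Z^m\) be its solution set. Each \(x_{i+1}\) is determined by \(x_i\), so a solution is determined by \(x_1\). Fix one solution \((x_1^0,\ldots,x_m^0)\). The homogeneous solutions satisfy \(a_i h_i=b_ih_{i+1}\), so they form a rank-one lattice \(\mathbb Z\cdot(h_1,\ldots,h_m)\) with \(h_i\) positive integers. Hence
\[
S=\{(x_1^0+h_1s,\ldots,x_m^0+h_ms)\mid s\in\mathbb Z\}.
\]
The chain conditions are linear in the parameter. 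For consecutive blocks with parameters \(s_j,s_{j+1}\), the linking equation \(Ax_m^{(j)}-Bx_1^{(j+1)}=C\) becomes
\[
(Ah_m)s_j-(Bh_1)s_{j+1}=C-Ax_m^0+Bx_1^0=:H.
\]
Set \(P=Ah_m>0\) and \(Q=Bh_1>0\).

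\emph{Step 2: apply Lemma~\ref{lem:constant-chain}.} By hypothesis the two-block system has an integer solution. Writing its two blocks as \(s_1,s_2\) gives \(Ps_1-Qs_2=H\), so \(\gcd(P,Q)\mid H\). Lemma~\ref{lem:constant-chain} with \(N-1\) equations then gives integers \(s_1,\ldots,s_N\) satisfying all the linking equations. For \(N=1\) the claim is just the solvability of \(\mathcal A\). Setting \(x_i^{(j)}=x_i^0+h_is_j\) produces the required solution.

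\emph{Expected obstacle.} The delicate point is Step 1: justifying that the integer solution set of \(\mathcal A\) is exactly a one-parameter family \(x^0+s h\) with \(s\in\mathbb Z\). The argument goes as follows.
\begin{enumerate}
\item Take \(h\) to be the primitive positive integer vector on the rational line of homogeneous solutions. This line is one-dimensional because \(x_1\) determines all later coordinates.
\item The difference of two integer solutions is an integer homogeneous solution.
\item Every integer vector on that line is an integer multiple of the primitive vector \(h\).
\end{enumerate}
With this in hand, the reduction to a two-variable constant chain is exact, and the rest is bookkeeping.
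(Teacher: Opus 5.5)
Your proof is correct and follows the same route as the paper: write the integer solutions of \(\mathcal A\) as a one-parameter family \(x^0+t\,q\), turn the linking equations into the constant chain \(Pt_j-Qt_{j+1}=H\), and invoke Lemma~\ref{lem:constant-chain}. The only difference is that the paper takes the direction \(q=\varepsilon(y^*-x^*)\) from the given two-block solution, after handling the case \(x^*=y^*\) separately; this gives \(H=-\varepsilon Q\) outright and avoids your lattice argument that the primitive vector parametrizes all integer solutions.
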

	
	\begin{proof}
		Let \(x^*=(x_1^*,\ldots,x_m^*)\) and
		\(y^*=(y_1^*,\ldots,y_m^*)\) be integer solutions
		of the system \(\mathcal A\) such that
		\[
		Ax_m^*-By_1^*=C.
		\]
		For \(N=1\), it suffices to take the solution \(x^*\). If \(x^*=y^*\),
		then, for any \(N\), we can set \(x_i^{(j)}=x_i^*\).
		
		Suppose that \(x^*\ne y^*\). Subtracting the corresponding equations, we obtain
		\[
		a_i(y_i^*-x_i^*)=b_i(y_{i+1}^*-x_{i+1}^*),
		\qquad 1\le i<m.
		\]
		Since \(a_i,b_i>0\), all the differences \(y_i^*-x_i^*\) are nonzero
		and have the same sign. Therefore,
		\[
		y_i^*=x_i^*+\varepsilon q_i,
		\qquad
		\varepsilon\in\{-1,1\},\quad q_i\in\mathbb Z_{>0},
		\]
		where \(a_iq_i=b_iq_{i+1}\). Consequently,
		\[
		x_i(t)=x_i^*+tq_i,\qquad 1\le i\le m,
		\]
		defines an integer solution of the system \(\mathcal A\)
		for each \(t\in\mathbb Z\).
		
		Set
		\[
		P=Aq_m,\qquad Q=Bq_1,\qquad
		H=C-Ax_m^*+Bx_1^*=-\varepsilon Q.
		\]
		Then \(P,Q>0\) and \(\gcd(P,Q)\mid H\).
		By Lemma~\ref{lem:constant-chain}, for each \(N\ge2\),
		there exist integers \(t_1,\ldots,t_N\) satisfying
		the equalities
		\[
		Pt_j-Qt_{j+1}=H,\qquad 1\le j<N.
		\]
		Set
		\[
		x_i^{(j)}=x_i^*+q_it_j,
		\qquad 1\le i\le m,\quad 1\le j\le N.
		\]
		Each tuple \((x_1^{(j)},\ldots,x_m^{(j)})\) satisfies
		the system \(\mathcal A\), and, for \(1\le j<N\), we have
		\[
		Ax_m^{(j)}-Bx_1^{(j+1)}
		=Ax_m^*-Bx_1^*+Pt_j-Qt_{j+1}
		=Ax_m^*-Bx_1^*+H=C.
		\]
		This gives the required integer solution.
	\end{proof}
	\begin{corollary}
		\label{cor:almost-square-powers}
		Let \(U=\alpha_1\cdots\alpha_s\), where \(s\ge2\), and
		\(\widehat U=\alpha_1\cdots\alpha_{s-1}\).
		If the word \(U\widehat U\) is interpretable by a pair of class
		transpositions \((a,b)\), then the word \(U^N\) is interpretable by this pair
		for each \(N\ge1\).
	\end{corollary}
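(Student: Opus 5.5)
The plan is to apply Lemma~\ref{lem:block} with the block system $\mathcal A$ taken to be the Diophantine system of $\widehat U$, and the connecting equation taken to be $E_{\alpha_s}$.

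Write the system $\mathcal S(U\widehat U)$ explicitly. The word $U\widehat U=\alpha_1\cdots\alpha_s\alpha_1\cdots\alpha_{s-1}$ has length $2s-1$ and $2s$ variables. Rename them as
\[
x_1,\ldots,x_s,\; y_1,\ldots,y_s,
\]
so that the equations read
\[
E_{\alpha_i}(x_i,x_{i+1})\ (1\le i<s),\qquad E_{\alpha_s}(x_s,y_1),\qquad E_{\alpha_i}(y_i,y_{i+1})\ (1\le i<s).
\]

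Each $E_\alpha(u,v)$ has the form $m_Xu-m_Yv=r_Y-r_X$ with $m_X,m_Y>0$. Hence the equations $E_{\alpha_i}$ for $1\le i<s$ form a system $\mathcal A(x_1,\ldots,x_s)$ of exactly the shape required in Lemma~\ref{lem:block}, with $m=s$. The hypothesis $s\ge2$ guarantees that this system is nonempty. It is in fact harmless even if it were empty, but $\widehat U$ has to be a nonempty word.

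The middle equation $E_{\alpha_s}(x_s,y_1)$ has the form $Ax_s-By_1=C$ with $A,B>0$. So the interpretability of $U\widehat U$ is precisely the hypothesis of Lemma~\ref{lem:block}.

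Applying Lemma~\ref{lem:block} with $N+1$ blocks gives integers $x^{(j)}_i$ for $1\le j\le N+1$. These satisfy $\mathcal A$ within each block, and $E_{\alpha_s}(x^{(j)}_s,x^{(j+1)}_1)$ for $1\le j\le N$.

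Concatenating the blocks in order produces a variable sequence $u_1,\ldots,u_{sN+1}$, namely
\[
x^{(1)}_1,\ldots,x^{(1)}_s,\,x^{(2)}_1,\ldots,x^{(N)}_s,\,x^{(N+1)}_1.
\]
Consecutive pairs in this sequence satisfy $E_{\alpha_1},\ldots,E_{\alpha_s}$ cyclically. This is exactly the system $\mathcal S(U^N)$, truncated after the first variable of block $N+1$. Only the first variable of the last block is used.

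Finally, I would check that $U^N$ is structurally admissible. This follows because $U\widehat U$ is admissible, so the junction $\alpha_s\alpha_1$ is admissible whenever $s\ge2$.

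No step is a real obstacle. The only care needed is the index bookkeeping, namely that the connecting equation is $E_{\alpha_s}$ and that using $N+1$ blocks supplies the final variable $u_{sN+1}$.
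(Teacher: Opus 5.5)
Your proposal is correct and follows the paper's proof essentially verbatim. The paper also takes $\mathcal S(\widehat U)$ as the block system and $E_{\alpha_s}$ as the connecting equation, applies Lemma~\ref{lem:block} with $N+1$ copies to solve $\mathcal S(U^N\widehat U)$, restricts to the first $Ns+1$ variables, and justifies the structural admissibility of $U^N$ by noting that every junction, including $\alpha_s\alpha_1$, already occurs in $U\widehat U$.
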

	
	\begin{proof}
		All transitions between adjacent letters of the words \(U^N\widehat U\)
		occur in the word \(U\widehat U\), so these words
		are structurally admissible.
		
		Write \(U=\widehat U\alpha_s\). The system
		\(\mathcal S(U\widehat U)\) consists of two copies of
		\(\mathcal S(\widehat U)\) joined by the equation
		\(E_{\alpha_s}\). By assumption, this system has an integer
		solution.
		
		Fix \(N\ge1\). By Lemma~\ref{lem:block}, we can
		join \(N+1\) copies of \(\mathcal S(\widehat U)\)
		by the same equation so that the resulting system has
		an integer solution. This system corresponds to the word
		\[
		(\widehat U\alpha_s)^N\widehat U=U^N\widehat U.
		\]
		Restricting the resulting solution to the first \(Ns+1\) variables,
		we obtain an integer solution of the system \(\mathcal S(U^N)\).
		Consequently, the word \(U^N\) is interpretable by the pair \((a,b)\).
	\end{proof}
	
	\begin{corollary}
		\label{cor:block-powers}
		If the word \(U^2\) is interpretable by the pair \((a, b)\), then the word \(U^N\) is interpretable by this pair for each \(N\geq1\).
	\end{corollary}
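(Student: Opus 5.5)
This should be a direct consequence of Corollary~\ref{cor:almost-square-powers}, applied to a suitable almost-square contained in \(U^2\), with a separate argument for the degenerate case of a one-letter word \(U\).

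\emph{Case \(s\ge2\).} Write \(U=\alpha_1\cdots\alpha_s\) and set \(\widehat U=\alpha_1\cdots\alpha_{s-1}\). Then \(U\widehat U\) is a prefix of \(U^2\). The system \(\mathcal S(U\widehat U)\) is exactly the set of the first \(2s-1\) equations of \(\mathcal S(U^2)\), in the variables \(u_1,\ldots,u_{2s}\). So restricting an integer solution of \(\mathcal S(U^2)\) to these variables gives an integer solution of \(\mathcal S(U\widehat U)\). The word \(U\widehat U\) is structurally admissible, since all its adjacent transitions occur in \(U^2\). Hence \(U\widehat U\) is interpretable, and Corollary~\ref{cor:almost-square-powers} yields the interpretability of \(U^N\) for every \(N\ge1\).

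\emph{Case \(s=1\).} Here \(U=\alpha\) is a single letter, and \(U^2=\alpha\alpha\) must itself be structurally admissible. This is impossible, because barred and unbarred letters alternate in every structurally admissible word. So the hypothesis is never met and the statement holds vacuously. Alternatively, one can bypass the parity observation: \(\mathcal S(U^2)\) is the chain \(E_\alpha(u_1,u_2),E_\alpha(u_2,u_3)\), which has the form of Lemma~\ref{lem:constant-chain}, and that lemma extends it to any length \(N\).

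The only point requiring care is checking that restricting a solution to a prefix system is legitimate. It is, because the equations of \(\mathcal S(w)\) are indexed consecutively and each involves only adjacent variables. The rest is a direct invocation of earlier results, so I expect no real obstacle.
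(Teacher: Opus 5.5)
Your proposal is correct and follows essentially the same route as the paper. The paper also observes that the structural admissibility of \(U^2\) forces \(|U|\) to be even, and hence at least \(2\); this covers your \(s=1\) case. It then passes to the interpretable prefix \(U\widehat U\) and applies Corollary~\ref{cor:almost-square-powers}.
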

	
	\begin{proof}
		The structural admissibility of \(U^2\) implies that the length of \(U\)
		is even and at least \(2\). Since \(U\widehat U\) is
		a prefix of \(U^2\), it is also interpretable by the pair \((a,b)\).
		It remains to apply Corollary~\ref{cor:almost-square-powers}.
	\end{proof}
	
	\section{Sufficient conditions for infinite order}
	\label{sec:infinite-order}
	
	\begin{lemma}
		\label{lem:component-pumping}
		Let \((a,b)\) be a pair of class transpositions, let \(U=\alpha_1\alpha_2\cdots\alpha_s\), where \(s\geq1\), and suppose that the word \(U^2\) is structurally admissible. Assume that the following
		conditions hold:
		\begin{enumerate}
			\item the system \(\mathcal S(U^2)\) has an integer solution;
			\item the system \(\mathcal S(U)\) has an integer solution \((v_1^*,\ldots,v_{s+1}^*)\) such that \(v_1^*\ne v_{s+1}^*\).
		\end{enumerate}
		Then \(\operatorname{ord}(ab)=\infty\).
		
	\end{lemma}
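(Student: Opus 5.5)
We will show that $\Gamma(a,b)$ has connected components of unbounded length, or an infinite component. Either case forces $\operatorname{ord}(ab)=\infty$ by Theorem~\ref{propmain}: unbounded finite lengths give unbounded $\nu_i$, hence no finite common multiple.

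By condition (1) and Corollary~\ref{cor:block-powers}, the word $U^N$ is interpretable for every $N\ge1$. Looking at the proof of Lemma~\ref{lem:block} applied to $\mathcal A=\mathcal S(U)$ gives more: the solution of $\mathcal S(U^N)$ has the explicit form $x_i^{(j)}=x_i^*+q_it_j$. This holds when the two copies $x^*,y^*$ of $\mathcal S(U)$ extracted from the solution of $\mathcal S(U^2)$ differ. If instead $x^*=y^*$, the constant solution satisfies $x_1^*=x_{s+1}^*$. In that case we switch to the one-parameter family $x_i(t)=x_i^*+t q_i$ of solutions of $\mathcal S(U)$, obtained from $x^*$ and $v^*$ by the argument in the proof of Lemma~\ref{lem:block}. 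This family contains solutions with distinct endpoints. The plan is to use condition (2) exactly to exclude degenerate solutions in which the path closes up.

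By Proposition~\ref{prop:word-path-correspondence}, a solution of $\mathcal S(U^N)$ gives an oriented path with $2Ns+2$ vertices in $\Gamma(a,b)$. Suppose, for contradiction, that every component is finite with length bounded by some $L$. Take $N$ with $2Ns+2>L+1$. Since valencies are at most $2$, the path must revisit vertices, so it winds around a finite cycle or bounces along a finite path. A walk in such a graph that cannot turn back, because first- and second-type edges alternate, must wind around a cycle. The edges of the second type then recur periodically, which gives a coincidence $u_i=u_j$ with $i\equiv j\pmod 2$ inside the solution.

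The main obstacle is turning these coincidences into a contradiction with condition (2). On a cycle of length $4k$, the vertex sequence has period $2k$ letters. I would try to prove that, within the affine family $u_i=x_i^*+q_it_j$, repetition forces the progression $t_j$ to be constant. With positive $P,Q$ in $Pt_j-Qt_{j+1}=H$, the sequence $t_j$ is either constant (the fixed point $t=H/(P-Q)$) or strictly monotone with $|t_j|\to\infty$, in which case all the $u$'s are eventually distinct. Here condition (2) says that $\mathcal S(U)$ has a solution with $v_1\ne v_{s+1}$. Since the family is one-dimensional, a fixed point with $x_1=x_{s+1}$ for all $t$ would require $q_1=q_{s+1}$ and equal base values. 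I expect to choose the free shift so that the constant-$t$ case is avoided, giving a non-repeating, hence simple, path of arbitrary length. This contradicts the bound $L$, so either components of unbounded length exist or an infinite one does. In both cases $\operatorname{ord}(ab)=\infty$.
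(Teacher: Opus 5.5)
There is a gap at the decisive step, although your mechanism is the right one. It is the paper's argument in different coordinates. Take the linking equation $x_{s+1}-y_1=0$, i.e.\ $A=B=1$, $C=0$ in Lemma~\ref{lem:block}, so that $P=q_{s+1}$, $Q=q_1$, $H=x_1^*-x_{s+1}^*$. Then your recurrence $Pt_j-Qt_{j+1}=H$ is the paper's map $F=f_s\circ\cdots\circ f_1\colon u_1\mapsto u_{s+1}$ written in the coordinate $u_1=x_1^*+q_1t$. Your use of condition~(2) amounts to $F\ne\mathrm{id}_{\mathbb Q}$.

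The gap is that the step you leave as ``I expect to choose the free shift'' is the whole content of the lemma, and you do not carry it out.

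\textbf{Case $x^*\ne y^*$.} You must exhibit the shift. By the proof of Lemma~\ref{lem:constant-chain}, the substitution $t_j\mapsto t_j+P_0^{\,j-1}Q_0^{\,N-j}z$, with $P_0=P/\gcd(P,Q)$ and $Q_0=Q/\gcd(P,Q)$, preserves the chain. So $t_1$ takes infinitely many values, and at most one of them is the fixed point $H/(P-Q)$. If $P=Q$ there is no fixed point, since $H=-\varepsilon Q\ne0$.

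\textbf{Case $x^*=y^*$.} Here you have produced no solution of $\mathcal S(U^N)$ other than the constant one. Saying that the family $x^*+tq$ contains solutions with distinct endpoints merely restates condition~(2). You still have to chain members of the family. The condition $x_{s+1}(t_j)=x_1(t_{j+1})$ reads $q_{s+1}t_j=q_1t_{j+1}$, so here $H=0$. Condition~(2) together with $x_1^*=x_{s+1}^*$ forces $q_1\ne q_{s+1}$. Then $t_j=q_1^{N-j}q_{s+1}^{j-1}$ is a strictly monotone integer chain.

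Also drop ``$|t_j|\to\infty$''. For $P<Q$ the real orbit converges to the fixed point, and only strict monotonicity of a finite chain is needed.

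Second, a non-constant sequence $t_j$ does not give a ``non-repeating, hence simple'' path. It separates the values $x_i^*+q_it_j$ at the same position $i$ in different blocks, but says nothing about coincidences between different positions. That is nevertheless enough. Since $s$ is even, the block-boundary parameters $u_{js+1}$, $0\le j\le N$, are pairwise distinct and label second-type edges with identical labels. So you get $N+1$ distinct second-type edges in one component. In your contradiction framing, winding around a cycle of length $4k\le L$ would force $t_1=t_{2k+1}$.

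Once this is seen, both the case split and the argument by contradiction are superfluous, and this is how the paper proceeds. Take any integer solution $u^*$ of $\mathcal S(U^N)$ and a positive solution $q$ of its homogeneous system. Along $u^*+tq$ the coordinate $u_1$ takes infinitely many values, so some $t$ gives $F(u_1)\ne u_1$. Then $u_{js+1}=F^j(u_1)$ is strictly monotone in $j$, which yields the $N+1$ distinct edges directly.
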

	
	\begin{proof}
		Let \(\chi(\alpha_i)=X_iY_i\). The equation of the system \(\mathcal S(U)\) corresponding to the letter
		\(\alpha_i\) can be written as
		\[
		u_{i+1}=f_i(u_i),
		\qquad 1\leq i\leq s,
		\]
		where
		\[
		f_i(x)
		=
		\frac{m_{X_i}}{m_{Y_i}}x
		+
		\frac{r_{X_i}-r_{Y_i}}{m_{Y_i}}.
		\]
		Set
		\[
		F(x)=(f_s\circ\cdots\circ f_1)(x)
		=\lambda x+\beta,\qquad \lambda>0.
		\]
		Then every solution of the system \(\mathcal S(U)\)
		satisfies \(u_{s+1}=F(u_1)\). In particular, the condition \(v_1^*\ne v_{s+1}^*\) implies that \(F(v_1^*)=v_{s+1}^*\ne v_1^*\). Therefore,
		\begin{equation}
			F\ne\operatorname{id}_{\mathbb Q}.
			\label{eq:pumping-nonidentity}
		\end{equation}
		
		The structural admissibility of the word \(U^2\) implies the structural admissibility of the word \(U^N\) for each \(N\geq1\).
		
		The solvability of the system \(\mathcal S(U^2)\) and
		Corollary~\ref{cor:block-powers}
		imply that the system \(\mathcal S(U^N)\) is solvable over the
		integers for each \(N\geq2\).
		
		Fix \(N\geq2\) and choose an integer solution \(u^*=(u_1^*,\ldots,u_{Ns+1}^*)\) of the system \(\mathcal S(U^N)\) and a positive integer solution \(q=(q_1,\ldots,q_{Ns+1})\) of the homogeneous system
		corresponding to \(\mathcal S(U^N)\).
		Consequently,
		\begin{equation}
			u_i(t)=u_i^*+tq_i,
			\qquad t\in\mathbb Z,
			\label{eq:pumping-solutions}
		\end{equation}
		defines an infinite family of integer solutions of the system
		\(\mathcal S(U^N)\).
		
		Since \(F\) is a nonidentity map
		with positive coefficient \(\lambda\), it is strictly increasing and has
		at most one fixed point. Since \(q_1>0\), the quantity
		\(u_1(t)\) takes infinitely many values. We can therefore
		choose \(t\in\mathbb Z\) such that
		\begin{equation}
			F(u_1(t))\ne u_1(t).
			\label{eq:pumping-nonfixed}
		\end{equation}
		
		Note that
		\begin{equation}
			u_{ks+1}(t)=F^k(u_1(t)),
			\qquad 0\leq k\leq N.
			\label{eq:pumping-iterates}
		\end{equation}
		Since \(F\) is strictly increasing and \(F(u_1(t))\ne u_1(t)\), the sequence \(F^k(u_1(t))\) is strictly monotone. Hence its terms are pairwise distinct.
		
		The structural admissibility of the word \(U^2\) means that \(s\) is even.
		Therefore, the \(N+1\) distinct parameters obtained determine \(N+1\)
		distinct edges of the second type in one component of the graph \(\Gamma(a,b)\).
		Since \(N\) is arbitrary, Theorem~\ref{propmain} gives
		\(\operatorname{ord}(ab)=\infty\).
	\end{proof}
	
	\begin{corollary}
		\label{cor:square-word}
		If the word \(U^2\) is exactly
		interpretable by a pair of class transpositions \((a,b)\), then \(\operatorname{ord}(ab)=\infty\).
	\end{corollary}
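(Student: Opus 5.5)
The plan is to reduce Corollary~\ref{cor:square-word} directly to Lemma~\ref{lem:component-pumping}, checking its hypotheses one at a time.

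First, exact interpretability of \(U^2\) means the system \(\mathcal S(U^2)\) has an integer solution \((u_1,\ldots,u_{2s+1})\) with \(u_i\ne u_j\) whenever \(i<j\) and \(i\equiv j\pmod 2\). In particular \(\mathcal S(U^2)\) is solvable. This is condition~(1) of Lemma~\ref{lem:component-pumping}. Structural admissibility of \(U^2\) is part of the definition of interpretability, since the system \(\mathcal S(U^2)\) is only defined for structurally admissible words.

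Second, for condition~(2), I restrict the exact solution to its first \(s+1\) coordinates, obtaining \((v_1^*,\ldots,v_{s+1}^*)=(u_1,\ldots,u_{s+1})\). This restriction solves \(\mathcal S(U)\), because the first \(s\) equations of \(\mathcal S(U^2)\) are precisely \(\mathcal S(U)\). One could alternatively invoke Lemma~\ref{lem:factor}, but the direct restriction suffices.

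The one point needing care is showing \(v_1^*\ne v_{s+1}^*\). For this I need \(s\) to be even, so that indices \(1\) and \(s+1\) have the same parity and exactness applies. Structural admissibility of \(U^2\) forces barred and unbarred letters to alternate throughout \(U^2\), including across the junction between the two copies of \(U\). Therefore \(\alpha_s\) and \(\alpha_1\) have opposite bar status, and the letters of \(U\) alternate starting and ending with opposite types. Hence \(s\) is even, as already noted in the proof of Corollary~\ref{cor:block-powers}. Since \(1<s+1\le 2s+1\) and \(1\equiv s+1\pmod 2\), exactness gives \(u_1\ne u_{s+1}\).

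Both hypotheses of Lemma~\ref{lem:component-pumping} then hold, so \(\operatorname{ord}(ab)=\infty\). I do not expect any real obstacle; the only step that needs attention is the parity argument for \(s\).
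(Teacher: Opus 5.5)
Your proposal is correct and follows essentially the same route as the paper: restrict the exact solution of \(\mathcal S(U^2)\) to the first \(s+1\) coordinates to get a solution of \(\mathcal S(U)\), use the alternation of barred and unbarred letters to show \(s\) is even, and conclude \(u_1^*\ne u_{s+1}^*\) from exactness, so that Lemma~\ref{lem:component-pumping} applies. The only cosmetic difference is that you argue parity through the opposite bar status of \(\alpha_s\) and \(\alpha_1\), whereas the paper observes that \(\alpha_1\) occupies positions \(1\) and \(s+1\) of \(U^2\).
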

	
	\begin{proof}
		Set \(s=|U|\) and choose an integer solution \((u_1^*,\ldots,u_{2s+1}^*)\) of the system \(\mathcal S(U^2)\) that gives an exact interpretation
		of the word \(U^2\).
		
		In a structurally admissible word, barred and unbarred letters
		alternate. The first letter of the word \(U\) occurs in \(U^2\)
		at positions \(1\) and \(s+1\), so \(s\) is even.
		
		The first \(s\) equations of the system \(\mathcal S(U^2)\) form
		the system \(\mathcal S(U)\). Hence,
		\((u_1^*,\ldots,u_{s+1}^*)\) is an integer solution of this system.
		The indices \(1\) and \(s+1\) have the same parity,
		so exactness of the interpretation gives \(u_1^*\ne u_{s+1}^*\). Both conditions of Lemma~\ref{lem:component-pumping} are satisfied,
		and hence \(\operatorname{ord}(ab)=\infty\).
	\end{proof}
	
	\begin{lemma}
		\label{lem:two-letter-words}
		If the words \(\ell \bar r\) and \(\bar\ell r\) are interpretable
		by the same pair of class transpositions \((a,b)\), and
		at least one of them is exactly interpretable by this pair, then \(\operatorname{ord}(ab)=\infty\).
	\end{lemma}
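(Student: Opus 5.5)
The plan is to apply Lemma~\ref{lem:component-pumping} with \(U=\ell\bar r\). First, the word \(U^2=\ell\bar r\ell\bar r\) is structurally admissible. Indeed, \(\pi(\ell)=BACD\), \(\pi(\bar r)=CDBA\), and the suffix \(BA\) of \(\pi(\bar r)\) is the prefix of \(\pi(\ell)\).

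Next I reduce to the case where \(\ell\bar r\) itself is the exactly interpretable word. We have \(\sigma_3(\ell\bar r)=\bar\ell r\) and \(\sigma_3(\bar\ell r)=\ell\bar r\), so \(\sigma_3\) swaps the two words in the hypothesis. By Remark~\ref{rem:orbit-invariance}, passing from \((a,b)\) to \((b,a)\) preserves interpretability, exactness and the order of the product. Hence we may assume \(\ell\bar r\) is exactly interpretable and \(\bar\ell r\) is interpretable.

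An exact solution \((u_1,u_2,u_3)\) of \(\mathcal S(\ell\bar r)\) has \(u_1\ne u_3\), since indices \(1\) and \(3\) have the same parity. This is condition (2) of Lemma~\ref{lem:component-pumping}. It remains to verify condition (1): that \(\mathcal S(\ell\bar r\ell\bar r)\) has an integer solution.

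Written out, the equations are
\[
r_1+m_1u_1=r_3+m_3u_2,\quad r_4+m_4u_2=r_2+m_2u_3,\quad r_1+m_1u_3=r_3+m_3u_4,\quad r_4+m_4u_4=r_2+m_2u_5 .
\]
Introduce the index sets
\[
K_A=\{k: r_1+m_1k\in r_3(m_3)\},\qquad K_B=\{k: r_2+m_2k\in r_4(m_4)\}.
\]
Each is an arithmetic progression in \(\mathbb Z\) (or empty). Since \(\bar\ell r\) is interpretable, its middle variable \(v_2\) lies in \(K_A\cap K_B\), so \(K_A\cap K_B\ne\varnothing\).

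Let \(R\) be the set of values \(u_3\) occurring in solutions of \(\mathcal S(\ell\bar r)\). Then \(R\subseteq K_B\), and \(R\ne\varnothing\) by interpretability of \(\ell\bar r\). Moreover \(R\) is an arithmetic progression: the admissible \(u_2\) form an arithmetic progression \(L\), namely those \(u_2\) whose \(C\)-value lies in \(r_1(m_1)\) and whose \(D\)-value lies in \(r_2(m_2)\). Then \(R\) is the image of \(L\) under the affine map \(u_2\mapsto(r_4-r_2+m_4u_2)/m_2\).

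Solvability of \(\mathcal S(U^2)\) is equivalent to \(R\cap K_A\cap K_B\ne\varnothing\), which reduces to \(R\cap K_A\ne\varnothing\). Given such a \(u_3\), the value \(u_4\) exists because \(u_3\in K_A\), and then \(u_5\) exists because \(u_4\) again lies in a progression \(L\)-type set. This last point must be checked: it reduces to the same membership conditions by the symmetry of the equations.

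The main obstacle is proving \(R\cap K_A\ne\varnothing\). By the Chinese remainder theorem, several arithmetic progressions intersect if and only if they intersect pairwise, and this can be checked one prime at a time. So I would fix a prime \(p\) and compare the \(p\)-parts of the moduli of \(R\), \(K_A\) and \(K_B\), using three facts:
\begin{itemize}
\item[(i)] \(R\subseteq K_B\);
\item[(ii)] \(K_A\cap K_B\ne\varnothing\);
\item[(iii)] the explicit moduli: \(K_A\) has modulus \(m_3/\gcd(m_1,m_3)\), \(K_B\) has modulus \(m_4/\gcd(m_2,m_4)\), and the modulus of \(R\) is computed from \(L\) and the slope \(m_4/m_2\).
\end{itemize}

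I would first try to show that, prime by prime, the \(p\)-part of the modulus of \(R\) is at least that of \(K_B\)'s relevant \(p\)-constraint, or otherwise that the \(p\)-condition defining \(R\) is implied by those of \(K_B\) together with \(K_A\). In that case compatibility with \(K_A\) is inherited from (ii).

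If this direct comparison fails, the fallback is different. I would use both solutions, the exact one of \(\mathcal S(\ell\bar r)\) and one of \(\mathcal S(\bar\ell r)\), to shift parameters along homogeneous solutions, as in Lemma~\ref{lem:block}, and build the four-equation chain directly. Once \(\mathcal S(U^2)\) is solvable, Lemma~\ref{lem:component-pumping} gives \(\operatorname{ord}(ab)=\infty\).
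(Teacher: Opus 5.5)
\textbf{The key step is not proved, and the equivalence you state is wrong.} Your skeleton matches the paper's: use \(\sigma_3\) to make \(\ell\bar r\) the exactly interpretable word, get \(u_1\ne u_3\) for condition (2) of Lemma~\ref{lem:component-pumping}, and reduce to solvability of \(\mathcal S(\ell\bar r\ell\bar r)\).

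The step that carries all the arithmetic content is \(R\cap K_A\neq\varnothing\), and you do not prove it. You only describe what you would try, plus an unspecified fallback. Inheritance from (ii) alone is not enough. Since \(u_1,v_2\in K_A\) and \(u_3,v_2\in K_B\), you get \(u_3\equiv u_1\) modulo \(\gcd(M_3,\mu_B)\), where \(M_3=m_3/\gcd(m_1,m_3)\) and \(\mu_B=m_4/\gcd(m_2,m_4)\) are the moduli of \(K_A\) and \(K_B\). But \(R\cap K_A\ne\varnothing\) requires the congruence modulo \(\gcd(M_3,\mu)\), where \(\mu\) is the modulus of \(R\), and \(\mu\) can exceed \(\mu_B\).

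The missing observation is about \(\mu=\operatorname{lcm}\bigl(M_1,m_2/\gcd(m_2,m_4)\bigr)\cdot m_4/m_2\), with \(M_1=m_1/\gcd(m_1,m_3)\). Its \(p\)-adic valuation exceeds that of \(\mu_B\) only at primes \(p\mid M_1\), and \(\gcd(M_1,M_3)=1\). Hence \(\gcd(M_3,\mu)\mid\gcd(M_3,\mu_B)\), which closes the step. This is exactly what the paper's divisibility \(\gcd(P,Q)\mid H\) encodes, with \(P=M_1m_4\), \(Q=M_3m_2\), \(H=m_2(u_1^*-u_3^*)\). The paper proves it from \(\gcd(P,Q)\mid\operatorname{lcm}(m_2,m_4)\), \(Q\mid m_2(v_2^*-u_1^*)\) and \(\operatorname{lcm}(m_2,m_4)\mid m_2(v_2^*-u_3^*)\).

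Second, your claim that solvability of \(\mathcal S(U^2)\) is equivalent to \(R\cap K_A\cap K_B\ne\varnothing\) is false. The condition \(R\cap K_A\ne\varnothing\) is equivalent only to solvability of \(\mathcal S(\ell\bar r\ell)\). Since \(u_4\) is determined by \(u_3\), a particular \(u_3\in R\cap K_A\) need not extend. For example, take \(a=\tau_{0(2),1(4)}\) and \(b=\tau_{0(3),1(3)}\); these satisfy the lemma's hypotheses. Then \((u_1,u_2,u_3,u_4)=(6,4,3,2)\) solves \(\mathcal S(\ell\bar r\ell\,)\), but \(1+3\cdot 2=1+4u_5\) has no integer solution. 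No symmetry of the equations repairs this. What does is Lemma~\ref{lem:block}, applied with the one-letter block \(E_\ell\) and connecting equation \(E_{\bar r}\). It turns a solution of \(\mathcal S(\ell\bar r\ell)\) into one of \(\mathcal S(\ell\bar r\ell\bar r\ell)\), and restricting that gives a solution of \(\mathcal S(U^2)\). This is how the paper finishes.
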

	
	\begin{proof}
		We may assume that the word \(\ell\bar r\) is exactly interpretable and that
		the word \(\bar\ell r\) is interpretable. Otherwise, apply the permutation \(\sigma_3\), which interchanges these two words and
		simultaneously interchanges the transpositions.
		
		Choose integer solutions \((u_1^*,u_2^*,u_3^*)\) and \((v_1^*,v_2^*,v_3^*)\) of the systems \(\mathcal{S}(\ell\bar r)\) and \(\mathcal{S}(\bar\ell r)\), respectively, choosing the first solution to give an exact interpretation.
		They satisfy the equalities
		\begin{equation}
			\begin{cases}
				m_1u_1^*-m_3u_2^*=r_3-r_1,\\
				m_4u_2^*-m_2u_3^*=r_2-r_4,\\
				m_3v_1^*-m_1v_2^*=r_1-r_3,\\
				m_2v_2^*-m_4v_3^*=r_4-r_2.
			\end{cases}
			\label{eq:two-word-system}
		\end{equation}
		Set \(U=\ell\bar r\). Thus, \((u_1^*,u_2^*,u_3^*)\) is an integer
		solution of the system \(\mathcal S(U)\) whose first and last
		coordinates are distinct.
		
		We show that the system \(\mathcal S(\ell\bar r\ell)\) has
		an integer solution. Set
		\[
		M_1=\frac{m_1}{\gcd(m_1,m_3)},
		\quad
		M_3 = \frac{m_3}{\gcd(m_1,m_3)}.
		\]
		Then \(\gcd(M_1,M_3)=1\), and all integer solutions
		of the equation
		\[
		m_1x-m_3y=r_3-r_1
		\]
		have the form
		\begin{equation}
			x=u_1^*+M_3 t,
			\qquad
			y=u_2^*+M_1 t,
			\qquad t\in\mathbb Z.
			\label{eq:two-word-parametrization}
		\end{equation}
		
		The system \(\mathcal S(\ell\bar r\ell)\) has an integer
		solution if and only if there exist
		\(t,v\in\mathbb Z\) such that
		\[
		m_4(u_2^*+M_1 t)-m_2(u_1^*+M_3 v)=r_2-r_4.
		\]
		Using the second equation of the system~\eqref{eq:two-word-system}, this condition
		can be written as
		\begin{equation}
			Pt-Qv=H,
			\label{eq:two-word-connecting-equation}
		\end{equation}
		where
		\[
		P=M_1 m_4,
		\qquad
		Q=M_3 m_2,
		\qquad
		H=m_2(u_1^*-u_3^*).
		\]
		
		We prove that \(\gcd(P,Q)\mid H\). Note that \(\gcd(P,Q)\) divides the numbers \(M_1 \operatorname{lcm}(m_2,m_4)\) and \(M_3 \operatorname{lcm}(m_2,m_4)\). Since \(\gcd(M_1, M_3)=1\), we obtain
		\begin{equation}
			\gcd(P,Q) \mid \operatorname{lcm}(m_2,m_4).
			\label{eq:two-word-lcm-divisibility}
		\end{equation}
		
		Comparing the first and third equations of the system~\eqref{eq:two-word-system},
		we obtain
		\[
		m_1(v_2^*-u_1^*)=m_3(v_1^*-u_2^*).
		\]
		Consequently,
		\begin{equation}
			Q\mid m_2(v_2^*-u_1^*).
			\label{eq:two-word-first-divisibility}
		\end{equation}
		
		Comparing the second and fourth equations of the system~\eqref{eq:two-word-system},
		we obtain
		\[
		m_4(v_3^*-u_2^*)=m_2(v_2^*-u_3^*).
		\]
		The number on the right-hand side is divisible by both \(m_2\) and \(m_4\).
		Therefore,
		\begin{equation}
			\operatorname{lcm}(m_2,m_4)\mid m_2(v_2^*-u_3^*).
			\label{eq:two-word-second-divisibility}
		\end{equation}
		Finally,
		\[
		\begin{aligned}
			H
			&=m_2(u_1^*-u_3^*)\\
			&=m_2(v_2^*-u_3^*)-m_2(v_2^*-u_1^*).
		\end{aligned}
		\]
		It follows from~\eqref{eq:two-word-lcm-divisibility}--\eqref{eq:two-word-second-divisibility} that \(\gcd(P,Q)\mid H\).
		
		Consequently, equation~\eqref{eq:two-word-connecting-equation} has an integer
		solution, and hence the system
		\(\mathcal S(\ell\bar r\ell)\) is solvable over the integers.
		
		By Lemma~\ref{lem:block}, we obtain an integer
		solution of the system for the word \(\ell\bar r\ell\bar r\ell\). Restricting it to the first four equations, we conclude that
		the system \(\mathcal S(U^2) = \mathcal S(\ell\bar r\ell\bar r)\) has an integer solution.
		
		The word \(U^2\) is structurally admissible, and the system
		\(\mathcal S(U)\) has an integer solution
		\((u_1^*,u_2^*,u_3^*)\) with \(u_1^*\ne u_3^*\). Therefore, by
		Lemma~\ref{lem:component-pumping}, \(\operatorname{ord}(ab)=\infty\).
	\end{proof}
	
	\begin{corollary}
		\label{cor:five-letter-word}
		If the word \(\ell\bar r p\bar\ell r\) is exactly interpretable by a pair of class transpositions \((a,b)\), then \(\operatorname{ord}(ab)=\infty\).
	\end{corollary}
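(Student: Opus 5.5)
The plan is to reduce the corollary to Lemma~\ref{lem:two-letter-words}: we show that the two-letter words \(\ell\bar r\) and \(\bar\ell r\) are both exactly interpretable by \((a,b)\), and then apply that lemma directly.

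First, check that both words occur as factors of \(w=\ell\bar r p\bar\ell r\). The word \(\ell\bar r\) is the prefix formed by positions \(1,2\), and \(\bar\ell r\) is the suffix formed by positions \(4,5\). By Lemma~\ref{lem:factor}, every nonempty factor of an exactly interpretable word is exactly interpretable by the same pair. Hence both \(\ell\bar r\) and \(\bar\ell r\) are exactly interpretable, and in particular interpretable, by \((a,b)\). Both hypotheses of Lemma~\ref{lem:two-letter-words} then hold, so \(\operatorname{ord}(ab)=\infty\).

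The only point needing care is that the factors are structurally admissible, so that their Diophantine systems are defined. This is automatic: every factor of a structurally admissible word is structurally admissible, and exact interpretability of \(w\) presupposes that \(w\) itself is structurally admissible. Concretely, \(\pi(\ell)=BACD\) and \(\pi(\bar r)=CDBA\) match on \(CD\); similarly \(\pi(\bar\ell)=DCAB\) and \(\pi(r)=ABDC\) match on \(AB\). The middle letter \(p\) plays no role beyond linking the two factors inside one simple path. I therefore expect no real obstacle here: the substantive work was done in Lemma~\ref{lem:two-letter-words}.
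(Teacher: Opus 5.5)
Your proposal is correct and is the same argument as the paper's proof. You use Lemma~\ref{lem:factor} to get exact interpretability of the factors \(\ell\bar r\) and \(\bar\ell r\), and then apply Lemma~\ref{lem:two-letter-words}.
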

	
	\begin{proof}
		By Lemma~\ref{lem:factor}, the factors \(\ell\bar r\)
		and \(\bar\ell r\) are exactly interpretable by the same pair.
		The claim follows from Lemma~\ref{lem:two-letter-words}.
	\end{proof}
	
	\begin{lemma}
		\label{lem:almost-square}
		Let \(U=\alpha_1\alpha_2\cdots\alpha_s\), where \(s\geq2\), and let \(\widehat U=\alpha_1\alpha_2\cdots\alpha_{s-1}\). If the word \(U\widehat U\) is exactly interpretable by a
		pair of class transpositions \((a,b)\), then \(\operatorname{ord}(ab)=\infty\).
	\end{lemma}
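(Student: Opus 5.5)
The plan is to reduce to Lemma~\ref{lem:component-pumping}, applied to the word \(U\) itself. I will check its two hypotheses separately.

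First, structural admissibility. The word \(U\widehat U\) is structurally admissible, and since \(s\ge2\) it contains the transition \(\alpha_s\alpha_1\) as well as every transition \(\alpha_i\alpha_{i+1}\) with \(i<s\). Hence \(U^2\) is structurally admissible. By the alternation of barred and unbarred letters, \(\alpha_1\) occupies positions \(1\) and \(s+1\) of \(U\widehat U\), so \(s\) is even.

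Next, hypothesis (1). Since \(U\widehat U\) is interpretable, Corollary~\ref{cor:almost-square-powers} shows that \(U^N\) is interpretable for every \(N\). In particular \(\mathcal S(U^2)\) has an integer solution.

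Then hypothesis (2). Take an integer solution \((u_1^*,\ldots,u_{2s}^*)\) of \(\mathcal S(U\widehat U)\) giving an exact interpretation. Its first \(s\) equations form \(\mathcal S(U)\), so \((u_1^*,\ldots,u_{s+1}^*)\) solves \(\mathcal S(U)\). Since \(s\) is even, the indices \(1\) and \(s+1\) have the same parity, and both lie in \(\{1,\ldots,2s\}\) because \(s+1\le 2s\). Exactness therefore gives \(u_1^*\ne u_{s+1}^*\).

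Lemma~\ref{lem:component-pumping} now yields \(\operatorname{ord}(ab)=\infty\).

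The only delicate point is the structural admissibility of \(U^2\). The word \(U^2\) differs from \(U\widehat U\) only by its last letter \(\alpha_s\), whose preceding transition \(\alpha_{s-1}\alpha_s\) already occurs inside \(U\). So no new transition appears, and the admissibility of \(U^2\) follows directly from that of \(U\widehat U\).
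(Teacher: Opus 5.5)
Your proposal is correct and is essentially the paper's proof. You extract $u_1^*\ne u_{s+1}^*$ from exactness of the prefix solving $\mathcal S(U)$, obtain solvability of $\mathcal S(U^2)$ from Corollary~\ref{cor:almost-square-powers}, and conclude via Lemma~\ref{lem:component-pumping}; your explicit check that $U^2$ is structurally admissible (it introduces no transition absent from $U\widehat U$) just spells out a point the paper asserts in passing.
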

	
	\begin{proof}
		Let \((u_1^*,u_2^*,\ldots,u_{2s}^*)\) be an integer solution of the system
		\(\mathcal S(U\widehat U)\) that gives an exact interpretation. Note that \(s\) is even.
		
		The first \(s\) equations form the system \(\mathcal S(U)\).
		Therefore, \((u_1^*,\ldots,u_{s+1}^*)\) is an integer solution of this system. The indices \(1\) and \(s+1\)
		have the same parity, so exactness of the original
		interpretation implies that \(u_1^*\ne u_{s+1}^*\).
		
		By Corollary~\ref{cor:almost-square-powers} with \(N = 2\), the system
		\(\mathcal S(U^2)\) has an integer solution. Moreover,
		the word \(U^2\) is structurally admissible. By Lemma~\ref{lem:component-pumping}, \(\operatorname{ord}(ab)=\infty\).
	\end{proof}
	
	\begin{corollary}
		\label{cor:almost-square-examples}
		If any of the words \(\bar{\ell} q \bar{r} p \bar{\ell} q \bar{r}, \bar{\ell} q \bar{p} r \bar{\ell} q \bar{p}, \ell\bar r\ell\) is exactly interpretable by a pair of class transpositions \((a, b)\), then \(\operatorname{ord}(ab)=\infty\).
	\end{corollary}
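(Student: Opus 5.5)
This corollary should follow directly from Lemma~\ref{lem:almost-square}. For each of the three words we exhibit a word \(U=\alpha_1\cdots\alpha_s\) with \(s\ge2\) such that the given word equals \(U\widehat U\), where \(\widehat U=\alpha_1\cdots\alpha_{s-1}\). Exact interpretability of \(U\widehat U\) by \((a,b)\) then gives \(\operatorname{ord}(ab)=\infty\) by that lemma.

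The decompositions come from the lengths. A word \(U\widehat U\) has length \(2s-1\).
\begin{itemize}
\item For \(\bar\ell q\bar r p\bar\ell q\bar r\), of length \(7\), we take \(s=4\) and \(U=\bar\ell q\bar r p\). Then \(\widehat U=\bar\ell q\bar r\), and \(U\widehat U=\bar\ell q\bar r p\,\bar\ell q\bar r\), which is the given word.
\item For \(\bar\ell q\bar p r\bar\ell q\bar p\), also of length \(7\), we take \(U=\bar\ell q\bar p r\). Then \(\widehat U=\bar\ell q\bar p\), and again the concatenation is the given word.
\item For \(\ell\bar r\ell\), of length \(3\), we take \(s=2\) and \(U=\ell\bar r\). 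Then \(\widehat U=\ell\), and \(U\widehat U=\ell\bar r\ell\).
\end{itemize}

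Lemma~\ref{lem:almost-square} has no hypotheses beyond \(s\ge2\) and the exact interpretability of \(U\widehat U\), and \(s\ge2\) holds in all three cases. Structural admissibility of the given words is implicit in their being exactly interpretable, since interpretability is only defined for structurally admissible words. The lemma's proof uses that \(s\) is even, and indeed \(s\in\{2,4\}\) here. So the lemma applies to each word.

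There is no genuine obstacle. The only care needed is to read off the correct period \(U\) from each word, which the length count \(2s-1\) determines uniquely, and to check that \(s\ge 2\).
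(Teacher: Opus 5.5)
Your proposal is correct and is essentially the paper's argument: the paper gives no separate proof and states the corollary directly after Lemma~\ref{lem:almost-square} as an immediate application. Your decompositions \(U=\bar\ell q\bar r p\), \(U=\bar\ell q\bar p r\), and \(U=\ell\bar r\) (with \(s\in\{4,4,2\}\)) are exactly the ones needed to write each word as \(U\widehat U\) and apply that lemma.
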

	
	\section{A criterion for infinite order}
	\label{sec:criterion}
	
	\begin{lemma}
		\label{lem:four-pairs}
		Set
		\[
		\Phi(u)=
		\bigl(r_3+m_3u,\;r_4+m_4u\bigr)\pmod{\gcd(m_1,m_2)}.
		\]
		If the values of the map \(\Phi\) include three of
		the four pairs
		\[
		(r_1,r_1),\quad
		(r_1,r_2),\quad
		(r_2,r_1),\quad
		(r_2,r_2)
		\pmod{\gcd(m_1,m_2)},
		\]
		then \(\gcd(m_1,m_2)\mid r_1-r_2\).
	\end{lemma}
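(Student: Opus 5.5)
Set $g=\gcd(m_1,m_2)$ and $\delta=r_2-r_1$. We must show $g\mid\delta$. Note that the conclusion contradicts the disjointness condition for $a$, so in applications the lemma says that $\Phi$ takes at most two of the four pairs. The plan is to exploit that $\Phi$ is an affine map of $u$ into $(\mathbb Z/g)^2$. As $u$ varies, the values of $\Phi$ form a coset $\Phi(0)+\langle(m_3,m_4)\rangle$ of the cyclic subgroup generated by $(m_3,m_4)\bmod g$. So any difference of two attained values lies in the subgroup $L=\{k(m_3,m_4)\bmod g : k\in\mathbb Z\}$.

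Among any three of the four pairs, we can always find two whose difference is $(\delta,0)$ and two whose difference is $(0,\delta)$. Indeed, the four pairs are the corners of a ``square'' $\{r_1,r_2\}^2$, and three corners always contain one horizontal and one vertical side. Hence both $(\delta,0)$ and $(0,\delta)$ lie in $L$ modulo $g$. Thus there exist integers $k,k'$ with
\[
km_3\equiv\delta,\quad km_4\equiv0,\qquad k'm_3\equiv0,\quad k'm_4\equiv\delta\pmod g.
\]
Since $L$ is a group, $(\delta,-\delta)$ and $(\delta,\delta)$ also lie in $L$.

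The key step is to turn this into $g\mid\delta$. Here I expect the main obstacle: purely from the cyclic-group structure, $L$ can contain both $(\delta,0)$ and $(0,\delta)$ without $\delta\equiv0$. For example, when $\gcd(m_3,m_4,g)$ is small, $L$ may be large. So extra information must be used, namely the disjointness of the classes in $b$, i.e.\ $r_3\not\equiv r_4\pmod{\gcd(m_3,m_4)}$. Alternatively, one may need a direct $p$-adic analysis.

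I would work prime by prime. Fix $p^e\parallel g$ and suppose $p^e\nmid\delta$, and let $v_p$ denote valuations. From $km_3\equiv\delta$ we get $v_p(m_3)\le v_p(\delta)<e$. Then $km_4\equiv0\pmod{p^e}$ gives $v_p(k)\ge e-v_p(m_4)$. On the other hand, $v_p(k)+v_p(m_3)=v_p(\delta)$ when $v_p(\delta)<e$. Combining these gives $v_p(m_4)>v_p(m_3)$. Symmetrically, $k'$ yields $v_p(m_3)>v_p(m_4)$, a contradiction. Therefore $p^e\mid\delta$ for every prime $p$, and so $g\mid\delta$.

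It remains to check the valuation bookkeeping carefully, including the case $v_p(\delta)\ge e$, where there is nothing to prove. If that works, the disjointness hypothesis on $b$ is not even needed, and the lemma follows directly.
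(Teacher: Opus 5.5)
Your proof is correct. The reduction matches the paper's; the finishing step differs. As in the paper, three of the four corners of $\{r_1,r_2\}^2$ contain one horizontal and one vertical side. This yields integers $k,k'$ with $km_3\equiv\delta$, $km_4\equiv0$ and $k'm_3\equiv0$, $k'm_4\equiv\delta\pmod g$.

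The paper then stays global. It sets $d=\gcd(m_3,m_4)$, $m_3'=m_3/d$, $m_4'=m_4/d$, and multiplies $km_3\equiv\delta$ by $m_4'$ and $km_4\equiv0$ by $m_3'$. Since $m_3m_4'=m_3'm_4$, this gives $g\mid m_4'\delta$; symmetrically $g\mid m_3'\delta$, and $\gcd(m_3',m_4')=1$ finishes. You instead localize at each $p^e\parallel g$ and show that $p^e\nmid\delta$ forces both $v_p(m_4)>v_p(m_3)$ and $v_p(m_3)>v_p(m_4)$. Your bookkeeping holds: $v_p(k)+v_p(m_3)=v_p(\delta)$ follows from $v_p(\delta)<e$ by the ultrametric inequality, and $k=0$ is excluded for the same reason.

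The two finishes are the global and local forms of one fact. The condition $\gcd(m_3',m_4')=1$ says that at every prime one of $m_3',m_4'$ is a $p$-adic unit, which is exactly what your case split on $v_p(m_3)$ versus $v_p(m_4)$ uses. The paper's version is shorter and avoids case analysis. Yours makes explicit what happens at each prime and why no hypothesis on $b$ is needed.

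One passage in your proposal is mathematically wrong, though your final argument does not depend on it: the ``main obstacle'' paragraph. A cyclic subgroup $L\le(\mathbb Z/g)^2$ cannot contain both $(\delta,0)$ and $(0,\delta)$ unless $\delta\equiv0\pmod g$. These two elements generate $\langle\delta\rangle\times\langle\delta\rangle\cong C_n\times C_n$ with $n=g/\gcd(g,\delta)$. As a subgroup of a cyclic group this must be cyclic, which forces $n=1$. So there was never a gap to fill with the disjointness of the classes of $b$, as you conclude yourself at the end. In fact this observation is a third, one-line proof of the lemma, using the cyclic structure of $L$ you had already set up.
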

	
	\begin{proof}
		Among any three of the specified pairs, there are two pairs
		that differ only in the first coordinate and two pairs
		that differ only in the second coordinate.
		
		Therefore, there exist integers \(u,v\) such that
		\[
		m_3(u-v)\equiv\pm(r_1-r_2),
		\quad
		m_4(u-v)\equiv0
		\pmod{\gcd(m_1,m_2)}.
		\]
		Multiplying the first congruence by \(m_4' = \frac{m_4}{\gcd(m_3, m_4)}\) and the second by \(m_3' = \frac{m_3}{\gcd(m_3, m_4)}\),
		we obtain $\gcd(m_1, m_2) \mid m_4'(r_1 - r_2)$.
		
		Similarly, there exist integers \(s,t\) such that
		\[
		m_3(s-t)\equiv0,
		\qquad
		m_4(s-t)\equiv\pm(r_1-r_2)
		\pmod{\gcd(m_1,m_2)},
		\]
		whence \(\gcd(m_1, m_2) \mid m_3'(r_1-r_2)\).
		
		Since \(\gcd(m_3',m_4')=1\), we conclude that \(\gcd(m_1,m_2) \mid r_1-r_2\).
	\end{proof}
	
	\begin{proposition}
		\label{prop:six-letter-obstructions}
		None of the words
		\[
		W_1=q\bar p r\bar\ell q\bar r,\quad
		W_2=q\bar p r\bar q\ell\bar p,\quad
		W_3=q\bar p r\bar q\ell\bar r,\quad
		W_4=\ell\bar p r\bar\ell q\bar r
		\]
		is interpretable by a pair of class transpositions.
	\end{proposition}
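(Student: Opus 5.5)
The plan is to reduce each of the four cases to Lemma~\ref{lem:four-pairs}, working modulo \(g=\gcd(m_1,m_2)\). Since \(a\) is a class transposition, \(r_1(m_1)\cap r_2(m_2)=\varnothing\), so \(g\nmid r_1-r_2\). It therefore suffices to show the following: any integer solution of \(\mathcal S(W_i)\) forces the map \(\Phi\) to take three of the four pairs \((r_1,r_1),(r_1,r_2),(r_2,r_1),(r_2,r_2)\) modulo \(g\). Lemma~\ref{lem:four-pairs} then gives \(g\mid r_1-r_2\), which is a contradiction.

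The key observation concerns the variables attached to edges of type \(CD\) or \(DC\). In all four words the unbarred letters come first, so these are exactly the even-indexed variables \(u_2,u_4,u_6\). Each such variable \(u_{2k}\) occurs in two consecutive equations. In one of them the number \(r_3+m_3u_{2k}\) is equated with a number of the form \(r_X+m_Xv\), where \(X\in\{A,B\}\). In the other, \(r_4+m_4u_{2k}\) is equated with \(r_Y+m_Yw\), where \(Y\in\{A,B\}\). Since \(g\mid m_1\) and \(g\mid m_2\), reducing modulo \(g\) gives
\[
\Phi(u_{2k})\equiv(r_X,r_Y)\pmod g .
\]
The pair \((X,Y)\) can be read off from \(\chi\) of the two letters surrounding position \(2k\): the letter ending at \(u_{2k}\) contributes its \(A/B\) symbol, and so does the letter starting at \(u_{2k}\).

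Take \(W_1=q\bar p r\bar\ell q\bar r\), whose letters have \(\chi\)-values \(BC,DA,BD,CA,BC,DB\). This yields
\[
\Phi(u_2)\equiv(r_2,r_1),\quad \Phi(u_4)\equiv(r_1,r_2),\quad \Phi(u_6)\equiv(r_2,r_2),
\]
which are three distinct pairs, and we are done. I will carry out the same bookkeeping for \(W_2,W_3,W_4\) and check in each case that \(u_2,u_4,u_6\) give three distinct pairs. For \(W_4\), the first letter \(\ell\) has \(\chi(\ell)=AC\) in place of \(BC\). This changes \(\Phi(u_2)\) to \((r_1,r_1)\), so \(W_4\) should again give three distinct pairs.

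The main risk is that for some \(W_i\) two of the three even variables give the same pair. I will check \(W_2\) and \(W_3\) carefully, since their tails \(\bar q\ell\bar p\) and \(\bar q\ell\bar r\) involve \(\ell\) and hence \(A\). If a coincidence does occur, the fallback is to use the odd-indexed variables as well. Their residues are constrained modulo \(\gcd(m_3,m_4)\), and I would then apply the symmetric version of Lemma~\ref{lem:four-pairs}, obtained via \(\sigma_3\), to derive \(\gcd(m_3,m_4)\mid r_3-r_4\). This contradicts the disjointness of the classes of \(b\) in the same way.
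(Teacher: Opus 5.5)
Your proposal is correct and matches the paper's proof: you reduce the two equations around each of $u_2,u_4,u_6$ modulo $\gcd(m_1,m_2)$, read off $\Phi(u_{2k}^*)$, and apply Lemma~\ref{lem:four-pairs}. The remaining cases work without the fallback: $W_2$ gives $(r_2,r_1),(r_2,r_2),(r_1,r_1)$, $W_3$ gives $(r_2,r_1),(r_2,r_2),(r_1,r_2)$, and $W_4$ gives $(r_1,r_1),(r_1,r_2),(r_2,r_2)$, exactly as in the paper's table.
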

	
	\begin{proof}	
		Suppose that one of these words \(w\) is interpretable
		by a pair of class transpositions \((a,b)\), and choose an integer
		solution \((u_1^*,\ldots,u_7^*)\) of the system \(\mathcal S(w)\).
		Set
		\[
		\Phi(u)=\bigl(r_3+m_3u,\;r_4+m_4u\bigr)\pmod{\gcd(m_1,m_2)}.
		\]
		Grouping the equations of the system in pairs and considering them
		modulo \(\gcd(m_1,m_2)
		\), we obtain
		\[
		\renewcommand{\arraystretch}{1.25}
		\begin{array}{c|c|c|c}
			&\Phi(u_2^*)&\Phi(u_4^*)&\Phi(u_6^*)\\ \hline
			W_1
			&(r_2,r_1)&(r_1,r_2)&(r_2,r_2)\\
			W_2
			&(r_2,r_1)&(r_2,r_2)&(r_1,r_1)\\
			W_3
			&(r_2,r_1)&(r_2,r_2)&(r_1,r_2)\\
			W_4
			&(r_1,r_1)&(r_1,r_2)&(r_2,r_2)
		\end{array}
		\pmod{\gcd(m_1,m_2)}.
		\]
		For example, when \(w=W_1\), the first two equations have the form
		\[
		r_2+m_2u_1^*=r_3+m_3u_2^*,
		\qquad
		r_4+m_4u_2^*=r_1+m_1u_3^*,
		\]
		whence \(\Phi(u_2^*)\equiv(r_2,r_1)\pmod{\gcd(m_1,m_2)}\).
		The remaining entries in the table are obtained similarly.
		
		Each row of the table contains three of the four pairs
		specified in Lemma~\ref{lem:four-pairs}. Therefore,
		\(\gcd(m_1,m_2)\mid r_1-r_2\). This contradicts the fact that
		\(a=\tau_{r_1(m_1),r_2(m_2)}\) is a class transposition.
	\end{proof}
	
	For convenience, set
	\[
	\begin{aligned}
		\mathcal F={}&
		\bigcup_{i=1}^{4}\operatorname{Orb}(W_i)
		\cup
		\operatorname{Orb}(\ell\bar r\ell)
		\cup
		\operatorname{Orb}(\ell\bar r p\bar\ell r),\\
		\mathcal E={}&
		\operatorname{Orb}(\bar\ell q\bar r p\bar\ell q\bar r)
		\cup
		\operatorname{Orb}(\bar\ell q\bar p r\bar\ell q\bar p).
	\end{aligned}
	\]
	
	\begin{lemma}
		\label{lem:length-seven-factors}
		Every structurally admissible word \(w\) of length \(7\) contains
		a factor \(v\), possibly equal to \(w\) itself, such that \(v\in \mathcal F \cup \mathcal E\).
	\end{lemma}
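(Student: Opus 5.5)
This is a finite combinatorial statement, and I would prove it by organizing the enumeration with the symmetry group rather than listing all words. First I would describe structurally admissible words as walks in a transition graph. Its vertices are the ordered pairs $XY\in\{AB,BA,CD,DC\}$, and each letter $\alpha$ is an edge from the pair formed by the first two symbols of $\pi(\alpha)$ to the pair formed by the last two. Reading off the table:
\begin{itemize}
\item unbarred letters go from $\{AB,BA\}$ to $\{CD,DC\}$: $\ell\colon BA\to CD$, $r\colon AB\to DC$, $p\colon BA\to DC$, $q\colon AB\to CD$;
\item barred letters go back: $\bar\ell\colon DC\to AB$, $\bar r\colon CD\to BA$, $\bar p\colon CD\to AB$, $\bar q\colon DC\to BA$.
\end{itemize}
So from each state there are exactly two possible next letters, and there are $8\cdot 2^6=512$ admissible words of length $7$. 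The group $G=\langle\sigma_1,\sigma_2,\sigma_3,\rho\rangle$ acts on this set and preserves membership in $\mathcal F\cup\mathcal E$, because both sets are unions of orbits. Also, $v$ is a factor of $w$ if and only if $g(v)$ is a factor of $g(w)$; for $\rho$ this holds since reversal maps factors to reversed factors. Hence it suffices to check the claim for one representative of each $G$-orbit of length-$7$ words.

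To cut the work further, I would first reduce by prefixes. Using $\sigma_1,\sigma_2,\sigma_3$ and the bar structure, I can assume $w$ begins with an unbarred letter, and in fact with a fixed one, say $q$ or $\ell$, after normalizing with $\sigma_1\sigma_2$. I would then explore the binary tree of extensions letter by letter, pruning a branch as soon as the current word contains a factor from $\mathcal F$. The short forbidden factors do most of the pruning. These are the length-$3$ words in $\operatorname{Orb}(\ell\bar r\ell)$, which I would list explicitly: applying $\sigma$'s gives words such as $q\bar p q$ and $p\bar q p$, and $\rho$ gives $\bar r\ell\bar r$, and so on. They are joined by the length-$5$ orbit of $\ell\bar r p\bar\ell r$ and the length-$6$ orbits of $W_1,\dots,W_4$. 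A surviving branch of length $7$ must then be shown to be itself an element of $\mathcal E$.

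The first concrete step is computing the orbits of the two length-$3$ words and the length-$5$ word. I expect the orbit of $\ell\bar r\ell$ to consist of words of the form $x\bar y x$ with a specific relation between $x$ and $y$, so it forbids a particular kind of ``returning'' pattern. Words avoiding it must keep alternating among the allowed transitions, which forces long words to be nearly periodic with period $4$ or $6$. That is exactly why the almost-squares $\bar\ell q\bar r p\bar\ell q\bar r$ (period $4$, with $U=\bar\ell q\bar r p$) and $\bar\ell q\bar p r\bar\ell q\bar p$ appear in $\mathcal E$. The words $W_i$ should then eliminate the remaining non-periodic survivors at length $6$.

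The main obstacle is purely bookkeeping: making sure the orbit lists are complete and that no branch of length $7$ escapes. Orbits may have up to $16$ elements, and $\rho$ interacts nontrivially with the $\sigma_i$. I would handle this by choosing, for each word, a normal form under $\langle\sigma_1,\sigma_2,\sigma_3\rangle$, namely the action on the first letter, and treating $\rho$ separately by checking reversed words. As a sanity check I would also verify the count independently: all $512$ words, or the roughly $512/16$ orbit representatives, can be machine-enumerated against the finite list of forbidden factors. In the written proof I would present the pruned tree of extensions of the normalized first letter as a table.
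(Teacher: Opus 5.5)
Your plan is essentially the paper's proof: use the transitive action of $\langle\sigma_1,\sigma_2,\sigma_3\rangle$ on $\Sigma$ to fix the first letter (the paper takes $\bar\ell$), grow the binary tree of admissible extensions while discarding any word that acquires a suffix in $\mathcal F$, and check that the surviving length-$7$ words lie in $\mathcal E$. The real content is carrying out this enumeration, which the paper does explicitly (branch counts $1,2,3,5,7,5,3$, with all three survivors in $\mathcal E$) and your proposal leaves undone; two minor slips are that $\rho(\ell\bar r\ell)=\bar\ell r\bar\ell$ whereas $\bar r\ell\bar r=\rho(r\bar\ell r)$, and that $\rho$ in fact commutes with every $\sigma_i$.
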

	
	\begin{proof}
		It suffices to prove that every structurally admissible word
		of length \(7\) containing no factors from \(\mathcal F\)
		belongs to \(\mathcal E\).
		
		Let \(w\) be such a word. The group $H=\langle\sigma_1,\sigma_2,\sigma_3\rangle$ acts transitively on the alphabet \(\Sigma\).
		The permutations \(\sigma_1,\sigma_2,\sigma_3\) preserve
		structural admissibility, and the set \(\mathcal F\), being
		a union of orbits, is invariant under \(H\).
		Moreover, if \(u\) is a factor of the word \(z\), then
		\(h(u)\) is a factor of \(h(z)\) for each \(h\in H\).
		
		Consequently, there exists \(h\in H\) such that the word \(\widetilde w=h(w)\) begins with \(\bar\ell\), remains structurally admissible,
		and contains no factors from \(\mathcal F\).
		
		From the table of values of \(\pi\), we obtain the table of admissible
		continuations:
		\begin{equation}
			\renewcommand{\arraystretch}{1.2}
			\begin{array}{c|c@{\qquad}c|c}
				\alpha&\text{next letters}
				&\alpha&\text{next letters}\\ \hline
				\ell&\bar r,\bar p
				&\bar\ell&r,q\\
				r&\bar\ell,\bar q
				&\bar r&\ell,p\\
				p&\bar\ell,\bar q
				&\bar p&r,q\\
				q&\bar r,\bar p
				&\bar q&\ell,p
			\end{array}
			\label{eq:admissible-continuations}
		\end{equation}
		
		For \(1\leq n\leq7\), let \(\mathcal B_n\) denote
		the set of structurally admissible words of length \(n\) beginning
		with \(\bar\ell\) and containing no factors from \(\mathcal F\).
		These sets are constructed successively: each word
		is extended by one of the two letters allowed by table
		\eqref{eq:admissible-continuations}, after which the extensions that
		have acquired a forbidden suffix are discarded. It suffices to check suffixes,
		since the original prefix already contains no factors from
		\(\mathcal F\).
		
		Successive enumeration gives
		\[
		\begin{array}{c|rrrrrrr}
			n&1&2&3&4&5&6&7\\ \hline
			|\mathcal B_n|&1&2&3&5&7&5&3.
		\end{array}
		\]
		To verify the last step, we write out
		\[
		\mathcal B_6=\{
		\bar\ell r\bar q p\bar\ell r,
		\bar\ell q\bar r p\bar\ell q,
		\bar\ell q\bar r p\bar q\ell,
		\bar\ell q\bar p r\bar\ell q,
		\bar\ell q\bar p r\bar q\ell
		\}.
		\]
		Each of these words has two structurally admissible
		continuations. Seven of the ten resulting words end in
		forbidden factors. Their membership in the set
		\(\mathcal F\) is verified by the following equalities:
		\[
		\begin{aligned}
			\bar\ell r\bar\ell
			&=\sigma_3(\ell\bar r\ell),\\
			q\bar r p\bar\ell q\bar p
			&=(\rho\circ\sigma_2\circ\sigma_1)(W_2),\\
			q\bar r p\bar q\ell\bar r
			&=(\rho\circ\sigma_2)(W_1),\\
			q\bar r p\bar q\ell\bar p
			&=\sigma_1(W_4),\\
			q\bar p r\bar\ell q\bar r
			&=W_1,\\
			q\bar p r\bar q\ell\bar r
			&=W_3,\\
			q\bar p r\bar q\ell\bar p
			&=W_2.
		\end{aligned}
		\]
		Thus,
		\begin{equation}
			\begin{aligned}
				\mathcal B_7=\{&
				\bar\ell r\bar q p\bar\ell r\bar q,\,
				\bar\ell q\bar r p\bar\ell q\bar r,\,
				\bar\ell q\bar p r\bar\ell q\bar p
				\}.
			\end{aligned}
			\label{eq:length-seven-survivors}
		\end{equation}
		The second and third words in~\eqref{eq:length-seven-survivors} are
		representatives of the two orbits forming \(\mathcal E\).
		The first word also belongs to \(\mathcal E\), since
		\[
		\bar\ell r\bar q p\bar\ell r\bar q
		=
		(\rho\circ\sigma_3\circ\sigma_2)
		(\bar\ell q\bar p r\bar\ell q\bar p).
		\]
		Consequently, \(\widetilde w\in\mathcal E\). Since
		\(\mathcal E\) is a union of orbits and \(\widetilde w=h(w)\),
		we obtain \(w\in\mathcal E\).
		
		Thus, if \(w\) contains no factors from \(\mathcal F\),
		then \(w\) itself is the required factor. Otherwise,
		the required factor belongs to \(\mathcal F\).
	\end{proof}
	
	\begin{theorem}
		\label{thm:length-seven}
		The equality \(\operatorname{ord}(ab) = \infty\) holds if and only if there exists a word that is exactly interpretable by the pair \((a,b)\) and has length \(7\).
	\end{theorem}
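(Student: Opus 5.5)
The proof splits into the two implications.

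\emph{Sufficiency.} Suppose some word \(w\) of length \(7\) is exactly interpretable by \((a,b)\). Exact interpretability is only defined for structurally admissible words, so Lemma~\ref{lem:length-seven-factors} gives a factor \(v\) of \(w\) with \(v\in\mathcal F\cup\mathcal E\). By Lemma~\ref{lem:factor}, \(v\) is exactly interpretable by \((a,b)\). By Remark~\ref{rem:orbit-invariance}, passing to the orbit representative changes \((a,b)\) to some \((\widetilde a,\widetilde b)\) but preserves exactness and the order of the product. So it suffices to treat the orbit representatives:
\begin{enumerate}
\item if \(v\in\operatorname{Orb}(W_i)\), then \(v\) is not even interpretable by Proposition~\ref{prop:six-letter-obstructions}, a contradiction;
\item if \(v\) lies in \(\operatorname{Orb}(\ell\bar r\ell)\) or in one of the two orbits making up \(\mathcal E\), then \(\operatorname{ord}(ab)=\infty\) by Corollary~\ref{cor:almost-square-examples};
\item if \(v\in\operatorname{Orb}(\ell\bar r p\bar\ell r)\), then \(\operatorname{ord}(ab)=\infty\) by Corollary~\ref{cor:five-letter-word}.
\end{enumerate}

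\emph{Necessity.} Assume \(\operatorname{ord}(ab)=\infty\). By Theorem~\ref{propmain}, either some component of \(\Gamma(a,b)\) is infinite, or the finite values \(\nu_i\) are unbounded. In the second case some component is a finite cycle or finite path of length large compared with \(16\). In either case there is a component containing a simple path of length at least \(15\) edges. Inside it, choose a simple oriented path that starts and ends with edges of the second type and contains \(8\) edges of the second type. Since edges of the two types alternate along a component (Lemma~\ref{lem:vertex-degrees}), this path has \(15\) edges and \(16\) vertices.

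By Proposition~\ref{prop:word-path-correspondence}, such a simple path yields a structurally admissible word of length \(7\) together with an exact interpretation. For a finite cycle, one must make sure the chosen segment does not wrap around, so that its vertices are distinct. A cycle of length \(4\nu\) with \(\nu\ge 5\) has at least \(20\) edges, which is enough room. Components with large finite \(\nu_i\) have length at least about \(2\nu_i\), so unboundedness of \(\nu_i\) provides such components.

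\emph{Main obstacle.} The delicate step is the necessity direction when all components are finite. One must check that \(\operatorname{lcm}=\infty\) forces some \(\nu_i\) to be large. Only finitely many integer values below any bound exist, so bounded \(\nu_i\) give a finite lcm. Hence unboundedness follows, and a sufficiently long component exists.
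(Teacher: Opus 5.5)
Your proposal is correct and takes essentially the same approach as the paper. Sufficiency goes through Lemma~\ref{lem:length-seven-factors}, Lemma~\ref{lem:factor}, Remark~\ref{rem:orbit-invariance}, the non-interpretability of the $W_i$, and the two corollaries giving infinite order; necessity extracts a simple $15$-edge path that starts and ends with second-type edges from an infinite or arbitrarily long component. You are in fact slightly more explicit than the paper about why an infinite lcm forces unbounded component lengths and why a long cycle component yields a segment that does not wrap around.
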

	
	\begin{proof}
		Let a structurally admissible word of length \(7\) be exactly
		interpretable by the pair \((a,b)\). By Lemma~\ref{lem:length-seven-factors},
		it contains a factor from \(\mathcal F\cup\mathcal E\), which
		is exactly interpretable by Lemma~\ref{lem:factor}.
		Taking Remark~\ref{rem:orbit-invariance} into account,
		Proposition~\ref{prop:six-letter-obstructions} rules out factors
		from the orbits of the words \(W_1,\ldots,W_4\), while
		Corollaries~\ref{cor:five-letter-word} and~\ref{cor:almost-square-examples}
		give $\operatorname{ord}(ab)=\infty$ in all other cases.
		
		Conversely, suppose that $\operatorname{ord}(ab)=\infty$. By Theorem~\ref{propmain}, the graph \(\Gamma(a,b)\) has
		an infinite component or finite components
		of unbounded size. In both cases, the graph
		\(\Gamma(a,b)\) contains a simple path of \(15\) edges
		beginning and ending with edges of the second type.
		After an orientation and labels are chosen, this path, by
		Proposition~\ref{prop:word-path-correspondence}, determines
		a structurally admissible word of length \(7\) that is exactly
		interpretable by the pair \((a,b)\).
	\end{proof}
	
	\section{The main theorem and sharpness of the bound}
	\label{sec:main-result}
	
	\begin{theorem}
		\label{thm:main}
		If \(\operatorname{ord}(ab)<\infty\), then \(\operatorname{ord}(ab)\mid840\).
	\end{theorem}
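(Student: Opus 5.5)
The plan is to combine Theorem~\ref{thm:length-seven} with Theorem~\ref{propmain}: finite order forbids long simple paths, and this bounds every \(\nu_i\) by \(8\). Suppose \(\operatorname{ord}(ab)<\infty\). By Theorem~\ref{thm:length-seven}, no structurally admissible word of length \(7\) is exactly interpretable by \((a,b)\). By Proposition~\ref{prop:word-path-correspondence}, this says that \(\Gamma(a,b)\) contains no simple path of \(15\) edges whose first and last edges are of the second type. By Lemma~\ref{lem:vertex-degrees}, edge types alternate along any path. Hence any simple path of \(15\) consecutive edges that starts with an edge of the second type also ends with one, since \(15\) is odd. So it suffices to show that every component either contains such a path or has \(\nu_i\le 8\).

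First I rule out infinite components. A one-way or two-way infinite path contains an edge of the second type. From that edge it continues for at least \(15\) further edges in one direction, and these give a simple path of the forbidden kind. So all components are finite cycles or finite paths, and \(\operatorname{ord}(ab)=\operatorname{lcm}\{\nu_i\}\) by Theorem~\ref{propmain}.

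Next, let \(C_i\) be a finite path. Its end vertices have valency \(1\) and, by Lemma~\ref{lem:vertex-degrees}, are incident to edges of the second type. So \(C_i\) itself is a simple path beginning and ending with edges of the second type. Its length is odd, and it cannot be \(\ge 15\), since otherwise its initial segment of \(15\) edges would be forbidden. Hence \(|C_i|\le 13\) and \(\nu_i=(|C_i|+3)/2\le 8\).

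Now let \(C_i\) be a finite cycle of length \(4k\). If \(4k\ge16\), start at an edge of the second type and take \(15\) consecutive edges. They visit \(16\le 4k\) distinct vertices, so this is a simple path of the forbidden kind. Hence \(4k\le 12\) and \(\nu_i=k\le 3\).

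Therefore every \(\nu_i\) lies in \(\{1,\dots,8\}\), and \(\operatorname{ord}(ab)\) divides \(\operatorname{lcm}(1,\dots,8)=840\). The argument is routine once Theorem~\ref{thm:length-seven} is available. The only step needing care is the counting that connects edge counts to word length: a word of length \(n\) corresponds to \(2n+2\) vertices, i.e.\ \(2n+1\) edges. For the cycle case one must check that \(15\) edges on a cycle of \(16\) vertices still form a simple path. It does, because it has \(16\) distinct vertices and never closes up.
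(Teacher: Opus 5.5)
Your proposal is correct and follows the paper's own proof: Theorem~\ref{thm:length-seven} together with Proposition~\ref{prop:word-path-correspondence} forbids simple \(15\)-edge paths whose first and last edges are of the second type, which bounds finite path components by \(13\) edges and cycle components by \(12\) edges, so every cycle of \(ab\) has length at most \(8\). The only difference is that you exclude infinite components explicitly with the same forbidden-path argument. The paper leaves this step implicit, since by Theorem~\ref{propmain} an infinite component would give \(\nu_i=\infty\).
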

	
	\begin{proof}
		By Theorem~\ref{thm:length-seven}, the graph \(\Gamma(a,b)\) contains no
		simple path of \(15\) edges beginning and ending
		with edges of the second type. Indeed, an orientation and labels
		for such a path would, by
		Proposition~\ref{prop:word-path-correspondence}, give
		an exact interpretation of a structurally admissible word of length \(7\).
		
		Let \(C\) be a finite path component. Its
		end vertices are incident to edges of the second type, and edges of the first
		and second types alternate. If \(|C|\geq15\), then the first
		\(15\) edges of the component would form a forbidden simple
		path. Consequently, since the length of a finite path is odd, $|C|\leq13$.
		
		Now let \(C\) be a finite cycle component. Its length
		is divisible by \(4\). If \(|C|\geq16\), then any \(15\)
		consecutive edges of the cycle whose first edge is of the second
		type would form a forbidden simple path. Therefore, \(|C|\leq12\).
		
		By Theorem~\ref{propmain} and Remark~\ref{rem:component-cycles}, a path component of length \(n\) determines
		a cycle of the permutation \(ab\) of length
		\[
		\frac{n+3}{2}\leq8,
		\]
		while a cycle component of length \(n\) determines cycles of length
		\[
		\frac n4\leq3.
		\]
		Consequently, all cycles of the permutation \(ab\) have length at
		most \(8\), and hence
		\[
		\operatorname{ord}(ab)
		\mid\operatorname{lcm}(1,2,3,4,5,6,7,8)
		=840.
		\]
	\end{proof}
	
	\begin{example}
		\label{ex:order-840}
		Consider the pair of class transpositions \((\tau_{0(40),\,18(28)}, \tau_{4(42),\,5(15)})\). These class transpositions are well defined, since \(\gcd(40,28)\nmid18\) and \(\gcd(42,15)\nmid1\). Then
		\[
		\operatorname{ord}(\tau_{0(40),\,18(28)} \cdot \tau_{4(42),\,5(15)})=840.
		\]
		
		This equality was verified by Kohl using the
		\textsf{RCWA} package \cite{RCWA} for \textsf{GAP}. Based on the code
		he provided, the computation can be written as follows:
		
		\begingroup
		\small
		\begin{verbatim}
			gap> LoadPackage("rcwa");;
			gap> CTSparse := true;;
			gap> a := ClassTransposition(18,28,0,40);;
			gap> b := ClassTransposition(5,15,4,42);;
			gap> g := a*b;;
			gap> P := RespectedPartition(g);;
			gap> Order(g);
			840
			gap> lengths := Set(List(Cycles(g,P),Length));
			[ 1, 2, 3, 4, 5, 7, 8 ]
			gap> Lcm(lengths);
			840
		\end{verbatim}
		\endgroup
		
		Here \texttt{RespectedPartition(g)} finds a finite partition
		\(P\) of the set \(\mathbb Z\) into residue classes that
		are permuted by the map \(g\), with
		\(g\) acting affinely on each of these classes. The command \texttt{Order(g)}
		computes the order of the permutation \(g\) itself, while the last two
		commands find the set of cycle lengths of its action on \(P\)
		and their least common multiple. Thus, the order of the product \(ab\) is \(840\),
		which proves that the bound in Theorem~\ref{thm:main} is sharp.
	\end{example}
	
	\begin{question}
		\label{question:finite-orders}
		Which positive divisors of \(840\) occur as orders
		of products of two class transpositions?
	\end{question}
	
	\section*{Acknowledgements}
	
	The work was supported by the Russian Science Foundation, project 24-11-00119, \url{https://rscf.ru/en/project/24-11-00119/}. The author thanks Stefan Kohl for verifying the order of the product
	of class transpositions in the example and for checking the manuscript.
	The author is grateful to his advisor, Valeriy G.~Bardakov, for posing
	the problem and for his guidance and assistance at every stage
	of its solution. The author is also deeply grateful to his wife, Elizaveta Iskra,
	for her help, patience, and understanding, as well as for her
	constant support and encouragement throughout the preparation
	of this work.
	
	\section*{Declaration of generative AI and AI-assisted technologies in the manuscript preparation process}
	
	During the preparation of this work, the author used ChatGPT-5.6 Sol to assist in checking parts of the mathematical arguments, to assist in a computational search for an example attaining the bound \(840\), and to assist with translation and language editing. All AI-assisted mathematical arguments and computationally suggested examples were independently verified by the author; in particular, the order-\(840\) example was independently verified by Stefan Kohl using the \textsf{RCWA} package for \textsf{GAP}. The author reviewed and revised all AI-generated content and takes full responsibility for the correctness, originality, and content of the article.

\end{document}